\documentclass[10pt,a4paper]{amsart}
\usepackage{amsthm,amsmath}
\usepackage{fourier}
\usepackage{nicefrac}
\usepackage{ulem}
\usepackage[cal=rsfso,calscaled=.96]{mathalfa}
\usepackage[colorlinks]{hyperref}
\hypersetup{
  pdftitle   = {},
  pdfauthor  = {},
  pdfcreator = {\LaTeX\ with package \flqq hyperref\frqq}
}
\usepackage{tikz}

\newtheorem{theorem}{Theorem}[section]
\newtheorem*{theorem*}{Theorem}

\newtheorem{lemma}[theorem]{Lemma}
\newtheorem*{lemma*}{Lemma}
\newtheorem{corollary}[theorem]{Corollary}

\newtheorem*{conjecture*}{Conjecture}

\theoremstyle{definition}
\newtheorem{definition}[theorem]{Definition}

\newtheorem{notation}[theorem]{Notation}
\theoremstyle{remark}
\newtheorem{remark}[theorem]{Remark}

\DeclareMathOperator{\Ext}{Ext}

\DeclareMathOperator{\Hom}{Hom}
\DeclareMathOperator{\Pic}{Pic}
\DeclareMathOperator{\Tot}{Tot}
\DeclareMathOperator{\Coh}{Coh}

\DeclareMathOperator{\tr}{tr}
\DeclareMathOperator{\im}{im}
\DeclareMathOperator{\diag}{diag}
\DeclareMathOperator{\Fuk}{Fuk}
\DeclareMathOperator{\SL}{SL}
\DeclareMathOperator{\GL}{GL}
\DeclareMathOperator{\Ad}{Ad}

\DeclareMathOperator{\id}{id}
\DeclareMathOperator{\ind}{ind}

\numberwithin{equation}{section}

\begin{document}
\title[A Landau--Ginzburg model without projective mirrors] {A Lie theoretical construction of a Landau--Ginzburg model without projective mirrors}

\author{E. Ballico, S. Barmeier, E. Gasparim, L. Grama, L. A. B. San Martin}
\address{Ballico --- Department of Mathematics, University of Trento, I--38050 Povo, Trento, Italy}
\address{Barmeier --- Westf\"alische Wilhelms-Universit\"at M\"unster, Mathematisches Institut, Einsteinstr.\ 62, M\"unster, Germany}
\address{Gasparim --- Departamento de Matem\'aticas, Universidad Cat\'olica del Norte, 
Av.\ Angamos 0600, Antofagasta, Chile}
\address{Grama and San Martin --- Imecc -- Unicamp, Departamento de Matem\'{a}tica, Rua S\'{e}rgio Buarque de Holanda,
651, Cidade Universit\'{a}ria Zeferino Vaz, 13083--859 Campinas -- SP, Brasil}
\address{E-mails: {\rm edoardo.ballico@unitn.it, s.barmeier@gmail.com, etgasparim@gmail.com, linograma@gmail.com, smartin@ime.unicamp.br.}}

\begin{abstract}
We describe the Fukaya--Seidel category of a Landau--Ginzburg model $\mathrm{LG}(2)$ for the semisimple adjoint orbit of $\mathfrak{sl}(2, \mathbb C)$. We prove that this category is equivalent to a full triangulated subcategory of the category of coherent sheaves on the second Hirzebruch surface. 
We show that no projective variety can be mirror to $\mathrm{LG}(2)$, and that this remains so after compactification. 
\end{abstract}

\maketitle
\tableofcontents

\section{Introduction}
We describe the Fukaya--Seidel category corresponding to a Landau--Ginzburg model for the semisimple adjoint orbit of $\mathfrak{sl} (2, \mathbb C)$. This is the simplest application of the following general result:

\begin{theorem} \cite[Thm.\,3.1]{GGSM1}
\label{thmslf} Let $\mathfrak h$ be the
Cartan subalgebra of a complex semisimple Lie algebra $\mathfrak g$. Given $H_0 \in 
\mathfrak h$ and $H \in \mathfrak h_{\mathbb R}$ with $H$ a regular
element, the \textit{height function} $f_H \colon \mathcal O (H_0) \to \mathbb C$ defined by 
\begin{equation*}
f_{H} (x) = \langle H, x \rangle, \qquad x \in \mathcal O (H_0) 
\end{equation*}
has a finite number (${}= \lvert \mathcal{W} \rvert / \lvert \mathcal W_{H_0} \rvert$) of isolated
singularities and gives $\mathcal O (H_{0})$ the structure of a
symplectic Lefschetz fibration.
\end{theorem}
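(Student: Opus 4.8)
The plan is to locate and count the critical points, then to check that each is nondegenerate, and finally to exhibit a Kähler form that turns $f_H$ into a symplectic Lefschetz fibration. First I would compute the differential. Identifying $T_x \mathcal O(H_0) = \{[\xi,x] : \xi \in \mathfrak g\} = \im(\ad_x)$ and using the invariance of the Cartan--Killing form in the form $\langle H,[\xi,x]\rangle = \langle \xi,[x,H]\rangle$, one obtains $df_H|_x([\xi,x]) = \langle \xi,[x,H]\rangle$. Since $\langle\cdot,\cdot\rangle$ is nondegenerate, $x$ is critical if and only if $[x,H]=0$, i.e.\ $x \in \mathfrak z(H)$. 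Because $H$ is regular, $\mathfrak z(H) = \mathfrak h$, so the critical set is $\mathcal O(H_0)\cap\mathfrak h$. Two elements of $\mathfrak h$ are $G$-conjugate exactly when they are $\mathcal W$-conjugate, hence the critical set equals the Weyl orbit $\mathcal W\cdot H_0$, which has precisely $\lvert\mathcal W\rvert/\lvert\mathcal W_{H_0}\rvert$ points; in particular it is finite and the singularities are isolated.

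Next I would verify nondegeneracy using the root space decomposition $\mathfrak g = \mathfrak h \oplus \bigoplus_\alpha \mathfrak g_\alpha$. At a critical point $x_0 \in \mathfrak h$ one has $T_{x_0}\mathcal O(H_0) = \bigoplus_{\alpha(x_0)\neq 0}\mathfrak g_\alpha$. Computing the Hessian along the curves $t \mapsto \Ad(\exp t\xi)\,x_0$ and using $[\mathfrak g_\alpha,\mathfrak g_{-\alpha}]\subset\mathfrak h$ together with $\langle H,[e_\alpha,e_{-\alpha}]\rangle = \alpha(H)\,\langle e_\alpha,e_{-\alpha}\rangle$, I expect the Hessian to take the shape
\[
\operatorname{Hess}(f_H)_{x_0}(v,v) = -\sum_{\alpha(x_0)\neq 0} \frac{\alpha(H)}{\alpha(x_0)}\,\langle v_\alpha, v_{-\alpha}\rangle,
\]
where $v = \sum_\alpha v_\alpha$ is the root space decomposition of a tangent vector. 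Since the Cartan--Killing form pairs $\mathfrak g_\alpha$ with $\mathfrak g_{-\alpha}$ perfectly, this symmetric $\mathbb C$-bilinear form is nondegenerate exactly when every coefficient $\alpha(H)/\alpha(x_0)$ is nonzero. The denominators are nonzero by the description of the tangent space, while $\alpha(H)\neq 0$ for every root $\alpha$ because $H$ is regular --- this is precisely where regularity enters. Hence each critical point is a nondegenerate (holomorphic Morse) singularity, and the holomorphic Morse lemma gives the local normal form $\sum_i z_i^2$.

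Finally I would supply the symplectic form. Because $H_0$ is semisimple, $\mathcal O(H_0)$ is a closed complex submanifold of the affine space $\mathfrak g$. Choosing a compact real form with conjugation $\tau$, the Hermitian inner product $\langle X,Y\rangle_\tau = -\langle X,\tau Y\rangle$ on $\mathfrak g$ defines a flat Kähler form whose restriction $\Omega$ to $\mathcal O(H_0)$ is again Kähler. Since $f_H$ is the restriction of a $\mathbb C$-linear map it is holomorphic, so its regular fibers are complex submanifolds and $\Omega$ restricts to a Kähler --- in particular symplectic --- form on each of them; away from the finitely many critical points $f_H$ is a submersion with symplectic fibers. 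Combined with the nondegeneracy of the singularities, this endows $\mathcal O(H_0)$ with the structure of a symplectic Lefschetz fibration. The main obstacle is the Hessian computation of the second step: organizing the bracket identities in the root space decomposition so that regularity of $H$ is seen to be both necessary and sufficient for nondegeneracy. The compatibility of $\Omega$ with the fibration is then comparatively formal, following from holomorphicity of $f_H$ together with the Kähler property.
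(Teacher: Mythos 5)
The paper offers no proof of this statement at all --- it is imported verbatim from \cite[Thm.\,3.1]{GGSM1} --- so the only comparison available is with that original argument, which your proposal reproduces in all essentials: the critical points are located via invariance of the Cartan--Killing form ($\langle H,[\xi,x]\rangle=\langle\xi,[x,H]\rangle$) and identified with the Weyl orbit $\mathcal W\cdot H_0$ of cardinality $\lvert\mathcal W\rvert/\lvert\mathcal W_{H_0}\rvert$; nondegeneracy is checked through the root space decomposition of the Hessian, where regularity of $H$ (i.e.\ $\alpha(H)\neq 0$ for all roots) enters exactly as you say; and the symplectic structure is the K\"ahler form induced from the Hermitian form of a compact real form, which makes the regular fibres complex, hence symplectic, submanifolds. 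Your proof is correct and follows essentially the same route as the cited source.
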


Here $\mathcal O(H_0)$ denotes the adjoint orbit of $H_0$ viewed as a symplectic submanifold
of $\mathfrak{sl} (2, \mathbb C)$ with the symplectic form 
\begin{equation}
\label{symp}
\Omega = \im \mathcal H,
\end{equation}
where $\mathcal H$ is the Hermitian form on $\mathfrak g$ defined by
\[
\mathcal H(u,v) = \langle u, J v \rangle,
\]
for $J$ any almost complex structure and $\langle \,\cdot\, {,} \,\cdot\, \rangle$ denoting the Cartan--Killing form. In the following we will take $J$ to be multiplication by $i$ coordinatewise.

In the language of mirror symmetry, $f_H$ is called a superpotential.

\begin{notation}
\label{orbit}
Let us denote by $\mathrm{LG} (2)$ the Landau--Ginzburg model formed by the pair $(X, f_H)$ where 
$X := \mathcal O (H_0)$ is the semisimple orbit of $\mathfrak{sl} (2, \mathbb C)$ for
$H_0 = \left(
\begin{smallmatrix}
1 & 0 \\
0 & -1
\end{smallmatrix}
\right)$
considered as a symplectic manifold with the symplectic form as in (\ref{symp}) and given the structure of a symplectic Lefschetz fibration by the superpotential $f_H \colon X \rightarrow \mathbb C$ for the choice $H = H_0$. 
\end{notation}

We calculate the category of Lagrangian vanishing cycles for $\mathrm{LG}(2)$ and obtain:

\vspace{5pt plus 1pt minus 1pt}

\noindent {\bf Theorem \ref{teosl2}.} \, {\it
The Fukaya--Seidel category $\Fuk(\mathrm{LG}(2))$ 
is generated by two Lagrangians $L_0$ and $L_1$ with morphisms: 
\begin{align*}
\Hom (L_i, L_j) 
\simeq
\begin{cases}
\mathbb Z \oplus \mathbb Z [-1] & i < j \\
\mathbb Z                       & i = j \\
0                               & i > j
\end{cases}
\end{align*}
and the products $m_k$ all vanish except for $m_2(\,\cdot\,, \id)$ and $m_2({\id,}\,\cdot\,)$.}

\vspace{5pt plus 1pt minus 1pt}

We then consider the question of finding a mirror to $\mathrm{LG}(2)$. That is, we look for an algebraic variety $Y$ 
such that its derived category of coherent sheaves $D^b(\Coh Y)$ is equivalent to the Fukaya--Seidel category of $\mathrm{LG}(2)$. 
We first obtain a negative result. 

\vspace{5pt plus 1pt minus 1pt}

\noindent {\bf Theorem \ref{nopmirror}.} \, {\it $\mathrm{LG}(2)$ has no projective mirrors.}

\vspace{5pt plus 1pt minus 1pt}

This came to us as a surprise and brought along the question of whether the absence of projective mirrors might have resulted of the noncompactness of $\mathrm{LG}(2)$. We then compactified $\mathrm{LG}(2)$ to a new model $\overline{\mathrm{LG}}(2)$ where we extend the potential to a map with target $\mathbb P^1$. 
However, for the compactified $\overline{\mathrm{LG}}(2)$, the absence of projective mirrors persists:

\vspace{5pt plus 1pt minus 1pt}
\noindent {\bf Theorem \ref{cnopmirror}.} \, {\it $\overline{\mathrm{LG}}(2)$ has no projective mirrors.}
\vspace{5pt plus 1pt minus 1pt}

The next best thing to do then is to find some projective variety $Y$ such that a proper subcategory of $D^b(\Coh Y)$ is equivalent to $\Fuk(\mathrm{LG}(2))$.
We find that an appropriate choice is $Y= F_2$, the second Hirzebruch surface.

\vspace{5pt plus 1pt minus 1pt}

\noindent {\bf Theorem \ref{sigma2}.} \,
{\it $\Fuk (\mathrm{LG} (2))$ is equivalent to the full triangulated subcategory $D^b (\reflectbox{$\mathrm{LG}$} (2)) := \langle \mathcal O_{F_2}, \mathcal O_{F_2} (-E) \rangle $ of $ D^b (\Coh F_2)$, where $ F_2$ is the second Hirzebruch surface and $E$ is the divisor with self-intersection $-2$.}
\vspace{5pt plus 1pt minus 1pt}

We also describe these categories using quivers in \S\ref{quivers}. 

\begin{remark}
\label{remarkkhs}
It turns out that for the choices made in \ref{orbit}, we obtain an example already studied by Khovanov and Seidel in \cite{KhS}, where they describe the Fukaya category of the Milnor fibration corresponding to an $A_m$ singularity. More precisely, they consider deformations of the $A_m$ singularities, and their case $m = 1$ happens to be algebraically and symplectically isomorphic to the adjoint orbit $\mathcal O (H_0)$ of $\mathfrak{sl} (2, \mathbb C)$, see \S\ref{structures}.
\end{remark}

In our approach outlined above, we use Lie theory to define a potential on $\mathcal O (H_0)$, making it into a Landau--Ginzburg model, and then obtain information about the mirror category.

In future work we intend to consider the cases of adjoint orbits of the Lie algebras $\mathfrak{sl} (n, \mathbb C)$ with $n>2$. Then, the corresponding spaces will not be deformations of $A_m$ singularities of Remark \ref{remarkkhs}, since such adjoint orbits have dimension strictly greater than 2.

We hope that illustrating this simplest case using an alternative or complementary approach will lead to further study of symplectic Lefschetz fibrations and their Fukaya--Seidel categories using techniques from Lie theory. In light of Theorem \ref{thmslf}, this approach can indeed be formulated for all semisimple Lie algebras and we hope that it will lead to further results in mirror symmetry.

\vspace{5pt plus 1pt minus 1pt}

\paragraph{\it Acknowledgements.} \, We are grateful to Patrick Clarke for pointing out 
a significant improvement to an earlier version of this work. We thank Denis Auroux, Lutz Hille, Ludmil Katzarkov, and Sukhendu Mehrotra for helpful suggestions and comments. 

E.\ Ballico was partially supported by MIUR and GNSAGA of INdAM (Italy). S.~Barmeier is supported by the Studienstiftung des deutschen Volkes. E.\ Gasparim was partially supported by a Simons Associateship ICTP, and Network Grant NT8, Office of External Activities, ICTP, Italy. Part of this work was completed during a visit of L.~Grama to Chile. We are thankful to the Vice Rector\'ia de Investigaci\'on and Desarrollo Tecnol\'ogico of the Universidad Cat\'olica del Norte whose support made this visit possible. L.~Grama is partially supported by FAPESP grant 2016/22755-1.

\section{The structures of $\mathrm{LG}(2)$}
\label{structures} 

In this section we describe the Landau--Ginzburg model $\mathrm{LG}(2) = (\mathcal O (H_0), f_H)$ 
defined in \ref{orbit} corresponding to the choices $H_0 = H = \left(\begin{smallmatrix}1 & 0 \cr 0 & -1 \end{smallmatrix}\right)$ and symplectic form $\Omega(A,B) = \im \langle A, i B \rangle$.

\vspace{5pt plus 1pt minus 1pt}

\noindent{\sc Algebraic structure.}
Set $X := \mathcal O (H_0)$. Given
$A = 
\left(\begin{smallmatrix}
x &  y \\
z & -x
\end{smallmatrix}\right)
\in X \subset \mathfrak{sl} (2, \mathbb C)$, its eigenvalues are $\pm 1$ and
\[
(x - \lambda) (-x - \lambda) - yz = \det (A - \lambda I) = (\lambda + 1) (\lambda - 1) = \lambda^2 - 1.
\]
Hence, $X$ is the hypersurface in $\mathbb C^3$ cut out by the equation 
\begin{equation}
\label{eq:sl2orbit}
x^2 + y z - 1 = 0.
\end{equation}
Since the derivatives of the polynomial $x^2+yz-1$ vanish simultaneously only at the origin which does not lie in $X$, it follows that $X$ is a smooth complex surface.

We know that in the case of $\mathfrak{sl} (n, \mathbb C)$, $\langle A, B \rangle$ is a constant multiple of $\tr(AB)$. The choice of $H = \left(\begin{smallmatrix}1 & 0 \cr 0 & -1 \end{smallmatrix}\right)$ gives the height function 
\[
  f_H (A)
  =
  \tr (HA)
  =
  \tr
  \begin{pmatrix}
    1 & 0 \\
    0 & -1
  \end{pmatrix}
  \begin{pmatrix}
    x & y \\
    z & -x
  \end{pmatrix}
  =
  2x.
\]
So, we write the potential as
\begin{equation}\label{potential}
\begin{aligned}
f_H \colon X &\rightarrow \mathbb{C} \\
(x,y,z) &\mapsto 2x.
\end{aligned}
\end{equation}

\vspace{5pt plus 1pt minus 1pt}

\noindent{\sc Smooth structure.}
$X$ is not compact. In further generality, let $\mathfrak{u}$ be a real compact form of $\mathfrak{sl}(n,\mathbb{C})$, then
 \cite[Thm.\,2.1]{GGSM2} proves that the semisimple adjoint orbit is 
diffeomorphic to the cotangent bundle of the generalized flag variety $\mathcal{O}(H_0)\cap i\mathfrak{u}$.
For the orbit of $\mathfrak{sl}(2,\mathbb{C})$ the flag variety
is $\mathbb{P}^1 \approx S^2$ and consequently we have the diffeomorphism $X \simeq T^*S^2$.

\vspace{5pt plus 1pt minus 1pt}

\noindent{\sc Complex structure.}
Let $Z_2 = \Tot \mathcal O_{\mathbb P^1} (-2) = T^* \mathbb P^1$ with its canonical complex structure, and let $\tau \in H^1 (Z_2, T Z_2)$ be a non-zero cohomology class. Denote by $\mathcal Z_2 (\tau)$ the complex deformation of $Z_2$ corresponding to $\tau$, see \cite[\S 4]{BG} or \cite{Ba} for details. Observe that $Z_2$ is not an affine variety (as the nontrivial first cohomology shows), hence the complex structure of $X$ cannot be isomorphic to that of $Z_2$. We claim that $X$ is biholomorphic to $\mathcal Z_2 (\tau)$. In fact, the algebra of global functions of $\mathcal Z_2 (\tau)$ can be calculated  via \v Cech cohomology using canonical transition functions as in \cite{BG}, giving:\footnote{Details of the cohomology calculations are presented in  \cite{Ba}. Global functions on $\mathcal Z_2 (\tau)$ were also described  by Tyurina in  \cite{Tyu}  in the context of  resolutions  of the  $A_1$ singularity.
 The space $\mathcal Z_2 (\tau)$ had been studied earlier  by Atiyah in \cite{At}.}
 \[
\mathbb C [x, y, z] \big/ \big( (x + 1)^2 - y z - 1 \big).
\]
The change of coordinates $(x, y) \mapsto (x-1, -y)$ 
shows that $\mathcal Z_2 (\tau) \simeq X$ as affine varieties.

\vspace{5pt plus 1pt minus 1pt}

\noindent{\sc Symplectic structure.}
We have just shown that the diffeomorphism type of $X$ is that of the cotangent bundle of a sphere.
The next result shows that this sphere is a Lagrangian subvariety of $X$.

\begin{lemma}\label{lag}
Consider the orbit $X$ with the symplectic form $\Omega$ defined in
(\ref{symp}), then $Y \subset X$ given by the real equation $p^2 + q^2 + r^2 = 1$
is a Lagrangian submanifold.
\end{lemma}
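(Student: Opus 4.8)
The plan is to realise $Y$ as the locus of Hermitian matrices inside $X$ and then to use that the symplectic form is the imaginary part of a Hermitian form which becomes \emph{real-valued} on such matrices. Writing a general element of $X$ as $A=\left(\begin{smallmatrix}x&y\\z&-x\end{smallmatrix}\right)$, the requirement that $A$ be Hermitian ($x\in\mathbb R$ and $z=\bar y$) lets me put $x=r$ and $y=p-iq$ with $p,q,r\in\mathbb R$; the defining equation $x^2+yz-1=0$ then becomes precisely $p^2+q^2+r^2=1$. Thus $Y=X\cap i\mathfrak u$, the intersection of the orbit with the space $i\mathfrak u$ of Hermitian traceless matrices, where $\mathfrak u=\mathfrak{su}(2)$ is the compact real form. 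This is the $SU(2)$-orbit of $H_0$, i.e.\ the flag variety $\mathbb P^1\approx S^2$ appearing in the discussion of the smooth structure, and in particular $\dim_{\mathbb R}Y=2=\tfrac12\dim_{\mathbb R}X$, so the dimension half of the Lagrangian condition is automatic.

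I would next compute the tangent spaces. Since $Y$ is the $SU(2)$-orbit through $A$, its tangent space is $T_AY=\{[\eta,A]:\eta\in\mathfrak u\}$. For $\eta$ anti-Hermitian and $A$ Hermitian one checks $([\eta,A])^*=A^*\eta^*-\eta^*A^*=\eta A-A\eta=[\eta,A]$, so $[\eta,A]$ is again Hermitian; hence every tangent vector to $Y$ is represented by a Hermitian traceless matrix, i.e.\ $T_AY\subset i\mathfrak u$.

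The core of the argument is the vanishing $\Omega|_{T_AY}=0$. The Hermitian form of \eqref{symp} is (a real multiple of) the positive-definite form on $\mathfrak{sl}(2,\mathbb C)$ attached to the compact real form, namely $\mathcal H(V,W)=\tr(VW^*)=-\tr\bigl(V\tau(W)\bigr)$, where $\tau(W)=-W^*$ is the conjugation fixing $\mathfrak u$, and $\Omega=\im\mathcal H$. For $V,W\in T_AY\subset i\mathfrak u$ we have $W^*=W$, so $\mathcal H(V,W)$ is a multiple of $\tr(VW)$, and $\overline{\tr(VW)}=\tr\bigl((VW)^*\bigr)=\tr(W^*V^*)=\tr(WV)=\tr(VW)$ shows this trace is real. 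Therefore $\Omega(V,W)=\im\mathcal H(V,W)=0$ on $T_AY$. Put structurally: $i\mathfrak u$ is a totally real form of $\mathfrak g$ on which $\mathcal H$ takes real values, so $\im\mathcal H$ annihilates it; together with the dimension count this proves $Y$ is Lagrangian.

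The step I expect to require the most care is purely one of conventions: verifying from the literal definition $\mathcal H=\langle\,\cdot\,,i\,\cdot\,\rangle$ of \eqref{symp} that, once the Cartan--Killing pairing is read so as to make $\mathcal H$ a genuine (positive-definite) Hermitian form, one indeed lands on $\tr(VW^*)$, so that the reality of $\tr(VW)$ on Hermitian arguments is manifest rather than obscured by stray factors of $i$. Should this bookkeeping become unwieldy, a convention-free alternative is available: the map $\sigma(A)=A^*$ is an anti-holomorphic involution of $X$ (in coordinates $(x,y,z)\mapsto(\bar x,\bar z,\bar y)$, which preserves $x^2+yz-1=0$) whose fixed-point set is exactly $Y$. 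One checks $\mathcal H(\sigma V,\sigma W)=\overline{\mathcal H(V,W)}$, so $\sigma$ is anti-symplectic, $\sigma^*\Omega=-\Omega$; evaluating on $V,W\in T_AY$, where $d\sigma$ acts as the identity, gives $\Omega(V,W)=-\Omega(V,W)$, whence $\Omega|_Y=0$ and $Y$ is Lagrangian.
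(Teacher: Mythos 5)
Your proof is correct and follows essentially the same route as the paper's: identify $Y = X \cap i\mathfrak{u}$ as the Hermitian traceless matrices in $X$, describe the tangent space as $\{[\eta,A] : \eta \in \mathfrak{u}\}$ and note it again consists of Hermitian matrices, then conclude $\Omega = \im \mathcal H$ vanishes there because $\tr(VW)$ is real when $V,W$ are Hermitian. Your added care with the convention for $\mathcal H$ (reading it as the positive-definite form $c\tr(VW^*)$ attached to the compact real form, which is indeed what the paper's proof implicitly uses) and your backup anti-symplectic involution argument are sensible refinements, not a different approach.
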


\begin{proof}
Let $\mathfrak{u}$ be a real compact form of $\mathfrak{sl}(2,\mathbb{C})$.
Here $\mathfrak{u}$ is the set of anti-Hermitian matrices with trace zero,
thus $i\mathfrak{u} $ is the set of Hermitian matrices with trace
zero. Note that the submanifold $Y$ can be described as the intersection $Y = X \cap i \mathfrak{u}$. In fact, an arbitrary matrix $S \in i \mathfrak{u}$ has the form 
\begin{equation*}
S=
\begin{pmatrix}
r & -p+iq \\ 
-p-iq & -r
\end{pmatrix}, 
\end{equation*}
with $p,q,r \in \mathbb{R}$. Since the orbit $X$ consists of $2 \times 2$
complex matrices whose entries satisfy $x^2+yz=1$, we see that $S \in X$
if and only if its entries satisfy $p^2+q^2+r^2=1$.

The tangent space of $Y$ at $S$ is given by $T_S Y = \{ [S,A] \mid A \in \mathfrak{u} \} $. Since $[i\mathfrak{u},\mathfrak{u}] \subset i\mathfrak{u}$ and $\tr(MN)$ is real when $M,N \in i \mathfrak{u}$, we conclude that $\Omega_S ([S,A],[S,B]) = 0$; thus $Y$ is Lagrangian.
\end{proof}

\begin{remark}
In greater generality, let 
$\mathfrak{u}$ be a real compact form of $\mathfrak{g}$. 
The intersection $\mathcal{O} (H_0) \cap i \mathfrak{u}$ is a generalized flag variety, 
and a similar argument shows that such a generalized flag
variety is Lagrangian for the symplectic form $\Omega$.
\end{remark}

\section{The Fukaya--Seidel category of $\mathrm{LG}(2)$} 
In this section we prove:

\begin{theorem}
\label{teosl2} The Fukaya--Seidel category of $\mathrm{LG}(2)$
is generated by two Lagrangians $L_0$ and $L_1$ with morphisms: 
\begin{align}
\label{morphismsfukaya}
\Hom (L_i, L_j) 
\simeq
\begin{cases}
\mathbb Z \oplus \mathbb Z [-1] & i < j \\
\mathbb Z                       & i = j \\
0                               & i > j
\end{cases}
\end{align}
where we think of $\mathbb Z$ as a complex concentrated in degree $0$ and $\mathbb Z [-1]$ as its shift, concentrated in degree $1$, and the products $m_k$ all vanish except for $m_2(\,\cdot\,, \id)$ and $m_2({\id,}\,\cdot\,)$.
\end{theorem}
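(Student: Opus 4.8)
The plan is to read off the objects and the directed structure of $\Fuk(\mathrm{LG}(2))$ directly from the Lefschetz fibration of Theorem \ref{thmslf}, and then to reduce the one nontrivial morphism space to the self-Floer cohomology of a single circle. First I would locate the critical data of the superpotential (\ref{potential}): writing the fibre over $c$ as $\{x = c/2,\ yz = 1 - c^2/4\}$, this is a smooth conic $\cong \mathbb C^*$ for $c \ne \pm 2$ and degenerates to the node $yz = 0$ at $c = \pm 2$, exhibiting the two critical points $(\pm 1, 0, 0)$ predicted by the count $\lvert \mathcal W \rvert / \lvert \mathcal W_{H_0} \rvert = 2$. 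Seidel's directed category attached to an ordered distinguished basis of thimbles then has exactly two objects $L_0, L_1$, and \emph{by construction} $\Hom(L_i, L_i) = \mathbb Z \cdot \id$ while $\Hom(L_i, L_j) = 0$ for $i > j$; this already accounts for the diagonal and the lower-triangular cases of (\ref{morphismsfukaya}).

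The geometric heart of the argument is the identification of the two vanishing cycles. Using Lemma \ref{lag}, the Lagrangian sphere $Y = \{p^2 + q^2 + r^2 = 1\}$ is the zero section of $X \cong T^*S^2$, and on $Y$ the potential $f_H = 2x = 2r$ takes values precisely in the real segment $[-2,2]$ joining the two critical values. I would show that $Y$ is the matching cycle over the straight matching path $[-2,2]$, its two thimbles being the hemispheres $\{r \gtrless 0\}$, so that both vanishing cycles coincide with the equator $S^1 = Y \cap \{r = 0\}$. Granting this, symplectic parallel transport to a common $\mathbb C^*$-fibre presents $L_0$ and $L_1$ as Hamiltonian-isotopic copies of this equatorial circle.

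The remaining entry of (\ref{morphismsfukaya}) then becomes $\Hom(L_0, L_1) = HF^*(S^1, S^1)$, which by Floer's theorem for exact Lagrangians equals the singular cohomology $H^*(S^1; \mathbb Z) = \mathbb Z \oplus \mathbb Z[-1]$, with the two generators sitting in degrees $0$ and $1$. Choosing a perfect Morse function on $S^1$ (two critical points, of indices $0$ and $1$) realises this complex with exactly two generators and forces the Floer differential $m_1$ to vanish, matching the claimed degrees.

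Finally, the vanishing of the higher products is essentially formal. A nonzero $m_k$ with no identity input would require a strictly increasing chain $L_{i_0} \to \cdots \to L_{i_k}$ of $k \ge 2$ non-identity morphisms, hence $k+1$ distinct objects; with only two objects available this is impossible, so every such product has an identity input and the $A_\infty$ unit axioms annihilate it, leaving only $m_2(\,\cdot\,, \id)$ and $m_2(\id, \,\cdot\,)$. I expect the main obstacle to be the matching-cycle step: making the parallel-transport identification of both vanishing cycles with a single equatorial circle precise requires controlling the symplectic connection of $f_H$ along $[-2,2]$. This can be carried out directly using the explicit fibres above, or else circumvented by invoking the symplectomorphism with the Milnor fibre of the $A_1$ singularity together with the computation of Khovanov--Seidel recorded in Remark \ref{remarkkhs}.
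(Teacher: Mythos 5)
Your proposal is correct and follows, in outline, the same route as the paper: locate the critical values $\pm 2$ of $f_H = 2x$, take the two thimbles over the segments joining the regular value $0$ to $\pm 2$, identify both vanishing cycles with a single circle in the regular fibre $X_0 \simeq \mathbb C^* \simeq T^*S^1$, compute $\mathit{HF}^*(L_0,L_1) \simeq H^*(S^1)$ via a Weinstein neighbourhood and a perfect Morse function, and dispose of the higher products formally. Two steps are executed differently, and the differences cut in opposite directions. The step you flag as the main obstacle --- realising $Y$ as a matching cycle and controlling parallel transport over $[-2,2]$ --- is sidestepped in the paper by writing the thimbles explicitly, $\alpha_\lambda(t) = \bigl(\lambda,\, e^{it}\sqrt{1-\lambda^2},\, e^{-it}\sqrt{1-\lambda^2}\bigr)$, so that at $\lambda = 0$ both vanishing cycles are literally the circle $(0, e^{it}, e^{-it})$, which is exactly your equator $Y \cap \{r=0\}$; the sphere of Lemma \ref{lag} is then recovered as the union of the thimbles over the two segments, i.e.\ the paper runs your matching-cycle picture in the reverse direction. (If you want to justify the explicit choice against the symplectic connection, note that the fibrewise circle action $(x,y,z) \mapsto (x, e^{i\theta}y, e^{-i\theta}z)$ is the adjoint action of the maximal torus, hence preserves $\Omega$ and $f_H$ and fixes the critical points, forcing the vanishing cycles to be orbit circles.) Conversely, your argument for the vanishing of the products is cleaner than the paper's: the paper argues via gradings (the strict unit is $x_0$, so nontrivial products could only involve $x_1$, and $m_k(x_1,\dotsc,x_1)$ has degree $2-k+k=2$, hence vanishes), which implicitly treats $x_1$ as an endomorphism of a single object in the Fukaya category of the fibre --- in the directed category with two objects an expression like $m_2(x_1,x_1)$ is not even a composable pair --- whereas your directedness count (a nonzero $m_k$, $k \ge 2$, with no identity inputs would require a strictly increasing chain of $k+1 \ge 3$ distinct objects) needs no grading information at all and typechecks verbatim in Seidel's directed framework. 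Both arguments are valid, and either one completes the proof.
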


We will now describe the thimbles using branched covers. As described in \S \ref{structures} the orbit is 
$X= \{x^2+yz=1\}$ together with the potential 
\begin{align*}
f_H \colon X &\rightarrow \mathbb{C} \\
(x,y,z) &\mapsto 2x.
\end{align*}

For each regular value $c \in \mathbb{C}$ we have $f_H(A)=
2x=c$ 
and a corresponding regular fibre over $c$, to simplify notation we parametrize the regular fibres 
by $\lambda:=c/2$, so 
\begin{equation*}
X_\lambda := \left\{yz= 1 - \lambda^2
\right\}.
\end{equation*}

From the above description it is immediate that the singular fibres occur when $\lambda^2 = 1$. The singular fibres $X_{\pm 1} = f_H^{-1} (\pm 1) = \{ yz = 0 \}$ correspond to the critical points $(x,y,z) = (\pm 1,0,0)$ of the potential $f_H$.

We first consider the cut given by $y=z$ where we need to analyse the two
branches of the square root $y = \pm \sqrt{1 -\lambda^2}.$ We get
the two curves 
\begin{equation*}
\left(\lambda,\pm \sqrt{1 -\lambda^2},\pm \sqrt{1 - 
\lambda^2}\right) \overset{\lambda \rightarrow 1}{\longrightarrow}
(1,0,0).
\end{equation*}
Using these curves we want to write down the thimbles, that is, for each $\lambda$ we wish to identify a circle in $X$ parametrized by $\gamma(t)$ with
$\gamma (0)   = \big( \lambda,  \sqrt{1 - \lambda^2},  \sqrt{1 - \lambda^2} \big)$ and
$\gamma (\pi) = \big( \lambda, -\sqrt{1 - \lambda^2}, -\sqrt{1 - \lambda^2} \big)$. For 
$0 \leq t \leq 2\pi$ we chose the thimble as: 
\begin{equation*}
\alpha_\lambda(t)= \left(\lambda , e^{it} \sqrt{1 - \lambda^2}, e^{-it} \sqrt{1 -\lambda^2}\right).
\end{equation*}
Thus, $\alpha_\lambda(t) \rightarrow (1,0,0) $ as $\lambda \rightarrow 1$ (so that $c \rightarrow 2$)
and for a regular value $\lambda$ the curve $\gamma(t) := \alpha_\lambda(t) $
is a Lagrangian circle on the fibre $f_H^{-1}(2\lambda).$ We fix the regular
value $0\in \mathbb{C}$, and consider the straight line joining $0$ to the critical value $2$; this is our 
choice of a {\it matching path}. Then
the family of Lagrangian circles $\alpha_\lambda (t)$ is fibred over this
matching path and produces the Lagrangian thimble. With a similar analysis
we can produce the Lefschetz thimble associated to the critical value $-2$.

Consider now the thimbles over the union of the two matching paths (line joining the two critical values $-2$ and $2$), the circles fibering over them result in a 
sphere $Y$ inside the orbit $X$. As shown in Lemma \ref{lag} this sphere is Lagrangian in $X$.

We will now describe the Fukaya--Seidel category $\Fuk (\mathrm{LG} (2))$ associated to the
Landau--Ginzburg model $\mathrm{LG} (2)$, which is the category of vanishing cycles defined as follows.

\begin{definition}
\cite[Def.\,3.1]{AKO1} A \textit{directed category of vanishing cycles} associated to a Landau--Ginzburg model is an $A_\infty$-category (over a coefficient ring $R$) with $r$ objects $L_1, \dotsc, L_r$ corresponding to the vanishing cycles (or more accurately, to the thimbles); the morphisms between the objects are given by 
\begin{equation}
\Hom (L_i, L_j) =
\begin{cases}
\mathit{CF}^* (L_i, L_j) = R^{\lvert L_i \cap L_j \rvert} & \text{if } i<j \\
                           R \cdot \id                    & \text{if } i=j \\
                           0                              & \text{if } i>j
\end{cases}
\end{equation}
and the differential $m_1$, composition $m_2$ and higher order products $m_k$ are defined in terms of Lagrangian Floer homology inside the regular fibre. See \cite{AKO1} for further details.
\end{definition}

We fix the regular value $0 \in \mathbb{C}$ of
our Landau--Ginzburg model and consider the line segments  $\beta$ and $\gamma$ that join $-2$ to $0$ and $0$ to $2$, respectively. The objects of the Fukaya--Seidel category are the two Lagrangian thimbles  $L_0:=\alpha_{\beta(s)}(t)$ and $L_1:=\alpha_{\gamma(s)}(t)$ (abusing notation we consider as $L_0$ and $L_1$ only the vanishing cycles in the regular fibre $X_0$; in our case, both  circles $S^1$).

Note that the vanishing cycles represent a single object in the Fukaya category of the regular fibre, but represent two distinct objects in the Fukaya--Seidel category of $\mathrm{LG} (2)$.

To specify the products in the category, we need to describe $\mathit{CF}^* (L_0, L_1)$. The regular fibre $X_0$ is homeomorphic to $\mathbb{C}^*$ and to the cylinder $T^*S^1$ via the map $g \colon \mathbb{C}^* \rightarrow T^*S^1$ given by 
\begin{equation*}
g(y) = \bigg( \frac{y}{\lvert y \rvert}, \ln \lvert y \rvert \bigg).
\end{equation*}
In the regular fibre the vanishing cycles can be parametrized by the
curve $(0,e^{it},e^{-it}) \in X_0$ by setting $\lambda=0$ in the 
expressions for the thimbles. Moreover, Lemma \ref{lag} 
implies that $L_0$ (and $L_1$) is Lagrangian in $X_0$ 
and therefore by Weinstein's theorem we have that a 
tubular neighbourhood of $L_0$ is symplectomorphic to the cotangent
bundle $T^*S^1$. 
In this situation the Floer homology is well known,
see \cite{Au} and \cite{FOOO}.

\begin{lemma}
\label{floer-iso}
$\mathit{HF}^*(L_0,L_1)\approx H^*(S^1;\mathbb{R})$. 
\end{lemma}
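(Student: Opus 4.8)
The plan is to establish the symplectomorphism between a tubular neighbourhood of $L_0$ and the cotangent bundle $T^*S^1$, and then invoke the standard computation of Lagrangian Floer cohomology of the zero section in a cotangent bundle. The computation $\mathit{HF}^*(L_0,L_1)\approx H^*(S^1;\mathbb R)$ is then a known consequence, as indicated by the references to \cite{Au} and \cite{FOOO}.

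\begin{proof}
As established above, the regular fibre $X_0 = \{yz = 1\}$ is symplectomorphic to the cylinder $T^*S^1$ via the map $g$, under which the vanishing cycle $L_0$, parametrized by $(0, e^{it}, e^{-it})$, maps to the zero section $S^1 \times \{0\}$. By Lemma \ref{lag}, both $L_0$ and $L_1$ are Lagrangian circles in $X_0$, and by Weinstein's neighbourhood theorem a tubular neighbourhood of $L_0$ is symplectomorphic to a neighbourhood of the zero section in $T^*S^1$. Since $L_0$ and $L_1$ are isotopic Lagrangian circles (both arising as vanishing cycles obtained by setting $\lambda = 0$), we may take $L_1$ to be a graph of an exact one-form, i.e.\ a Hamiltonian perturbation of the zero section, after a compactly supported isotopy inside this neighbourhood.

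The first step is to arrange $L_0$ and $L_1$ to intersect transversally and minimally. After Hamiltonian perturbation the two circles meet in exactly two points, which realizes the geometric intersection number and is the minimal possible number for two homotopic essential circles on the cylinder. This gives $\mathit{CF}^*(L_0, L_1) \cong \mathbb R^{|L_0 \cap L_1|} = \mathbb R^2$, with the two generators lying in degrees $0$ and $1$ by the standard index computation for the zero section of a cotangent bundle.

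The second step is to compute the Floer differential $m_1$ and show it vanishes. Because $L_1$ is the graph of $dh$ for a Morse function $h$ on $S^1$ with exactly two critical points, the Piunikhin--Salamon--Schwarz type identification of the Floer complex with the Morse complex of $h$ applies: the Floer differential corresponds to the Morse differential of $h$ on $S^1$, which is zero since $S^1$ has no nontrivial Morse boundary maps between its two critical points of index $0$ and $1$. Hence $\mathit{HF}^*(L_0, L_1) \cong H^*(S^1; \mathbb R)$, concentrated in degrees $0$ and $1$ each of rank one. This is exactly the statement of \cite{Au} and \cite{FOOO} for the self-Floer cohomology of the zero section in $T^*S^1$.

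The main obstacle is not the algebra but the verification that no pseudoholomorphic strips contribute to the differential, i.e.\ that the identification with Morse theory on $S^1$ is valid in this noncompact setting. In the cotangent bundle of a sphere the relevant fibre $T^*S^1 \cong \mathbb C^*$ is exact and noncompact, so one must ensure that Floer's compactness and transversality hold; this is precisely what the cotangent-bundle results of \cite{Au} and \cite{FOOO} guarantee, allowing the Floer cohomology of the zero section to be identified with its ordinary cohomology. With that input, the conclusion follows immediately.
\end{proof}
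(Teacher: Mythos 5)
Your proof is correct and follows essentially the same route as the paper: identify the regular fibre with $T^*S^1$ via the map $g$, use Lemma \ref{lag} and Weinstein's theorem to place the vanishing cycle as the zero section, and invoke the standard identification of the Floer cohomology of the zero section (via a Morse function with two critical points) with $H^*(S^1;\mathbb{R})$, exactly as the paper does by citing \cite{Au} and \cite{FOOO}. One small imprecision: the Morse differential on $S^1$ vanishes not because there are no gradient trajectories from the maximum to the minimum, but because the two trajectories cancel in pairs with signs --- the conclusion is unaffected.
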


We now fix a Morse function $f \colon S^1 \rightarrow \mathbb{R}$ with exactly two critical points. A critical point $p$ of $f$ with Morse index $\ind(p)$ defines a generator of degree $\deg(p)=n-\ind(p)$ in the Floer complex, where $n$ is the dimension of the variety (in our case $\dim S^1 = 1$). Since we have chosen $f$ with exactly two critical points, a minimum $x_0$ and a maximum $x_1$, the Morse indices are $0$ and $1$, respectively. We obtain:

\begin{lemma}
There is a natural choice of grading such that $\deg(x_0) = 0$ and $\deg(x_1) = 1$.
\end{lemma}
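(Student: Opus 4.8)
The plan is to combine the identification $HF^*(L_0,L_1)\cong H^*(S^1;\mathbb{R})$ of Lemma~\ref{floer-iso} with the standard Morse-theoretic model for the Floer cohomology of a Lagrangian and its Hamiltonian perturbation, and then to pin down the single remaining grading ambiguity using the directed structure of the category. The substantive computation has already been carried out in Lemma~\ref{floer-iso}; what remains is to organize the two generators into a coherent absolute grading.

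First I would note that $H^*(S^1;\mathbb{R})$ is one-dimensional in each of degrees $0$ and $1$, so the Floer complex has exactly two generators whose degrees differ by one, and since its total rank already equals $\dim H^*(S^1;\mathbb{R})$ the differential $m_1$ must vanish; the two generators therefore represent the classes of $H^0(S^1)$ and $H^1(S^1)$. Because a tubular neighbourhood of $L_0$ is symplectomorphic to $T^*S^1$ (Weinstein, as used in the text) and $L_1$ is obtained from $L_0$ by the time-one Hamiltonian flow generated by $f$, the two intersection points are precisely the critical points $x_0$ (minimum) and $x_1$ (maximum) of $f$, and each represents one of the two cohomology classes.

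The content of the lemma is then the choice of \emph{absolute} grading. The relative grading is rigid — the two generators must differ by $1$ — but the overall normalization, and indeed which critical point sits in degree $0$, depends on a choice of graded lifts of $L_0$ and $L_1$. This is exactly the ambiguity between perturbing by $+f$ and by $-f$, reflecting the ordering of the two Lagrangians along the matching path: the two conventions assign Floer index $\ind(p)$ and $n-\ind(p)$ respectively. I would fix the sign to be compatible with the directed ordering $L_0<L_1$, equivalently so that the generator coming from $H^0(S^1)$ — the cohomological unit, detected by the minimum of $f$ — sits in degree $0$. With this natural choice one reads $\deg(p)=\ind(p)$, giving $\deg(x_0)=0$ and $\deg(x_1)=1$, as claimed.

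The step I expect to be the main obstacle is justifying that this normalization is the natural one rather than an arbitrary convention. The key point to emphasize is that it is the unique choice placing the unit class in degree $0$, which is forced by the role of the degree-$0$ generator as a unit for the product: this is consistent with the morphism spaces $\mathbb{Z}\oplus\mathbb{Z}[-1]$ and with the surviving products $m_2(\,\cdot\,,\id)$ and $m_2(\id,\,\cdot\,)$ of Theorem~\ref{teosl2}. I would close by checking that this assignment is compatible with the degree-$0$ gradings already carried by the identity morphisms $\id\in\Hom(L_i,L_i)$, so that all generators assemble into a single coherent $\mathbb{Z}$-grading on the category.
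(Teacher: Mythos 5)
Your proposal is correct, and its core is the same Morse-theoretic model the paper uses: identify $\mathit{CF}^*(L_0,L_1)$ with the Morse complex of a two-critical-point function $f$ on $S^1$ via a Weinstein neighbourhood, so that the two generators are the minimum $x_0$ and maximum $x_1$ and the grading question reduces to assigning degrees to them. Where you genuinely differ is in how the absolute grading is pinned down. The paper does it by fiat, declaring the convention $\deg(p)=n-\ind(p)$ with $n=\dim S^1=1$ and then reading off the indices $0$ and $1$; note that, taken literally, this formula gives $\deg(x_0)=1$ and $\deg(x_1)=0$, the opposite of the lemma's statement, so the paper's text contains a conventions slip (it should read $\deg(p)=\ind(p)$, equivalently one should perturb by $-f$). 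Your argument instead fixes only the relative grading from $\mathit{HF}^*(L_0,L_1)\cong H^*(S^1;\mathbb{R})$ (Lemma \ref{floer-iso}) and then normalizes the absolute grading by requiring the class that plays the role of the unit --- the minimum, representing $H^0$ --- to sit in degree $0$. This is exactly the identification the paper itself relies on later (``the strict unit $\id$ equals $x_0$'' in the discussion of Lemma \ref{productsvanish}), and it is forced by strict unitality of the $A_\infty$ structure, so your route is convention-independent and actually repairs the sign inconsistency in the paper's formula. One small caveat: strictly speaking $x_0\in\Hom(L_0,L_1)$ is not an endomorphism, so calling it ``the unit'' presupposes identifying $L_0$ with $L_1$ via the Hamiltonian isotopy in the regular fibre; you should make that identification explicit, though the paper commits the same abuse.
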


Since the product $m_1$ in the Fukaya--Seidel category is
the differential of Floer homology, using Lemma \ref{floer-iso}, we obtain
the following description of the products $m_k$:

\begin{lemma}
\label{productsvanish}
The products $m_k$ for the Fukaya--Seidel category of $\mathrm{LG} (2)$ all vanish,
except for the trivial products $m_2 ({\id,}\,\cdot\,)$ and $m_2 (\,\cdot\,,\id)$.
\end{lemma}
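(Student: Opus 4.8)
The plan is to show that all higher products $m_k$ (for $k \geq 2$) and the differential $m_1$ vanish except in the trivial cases forced by the identity morphisms. The key structural observation is that, by the previous lemmas, the only nontrivial morphism spaces occur for $\Hom(L_0, L_1)$, and this space is two-dimensional with generators $x_0$ and $x_1$ in degrees $0$ and $1$ coming from the two critical points of a Morse function on $S^1$. Since $\Hom(L_i, L_i) = R \cdot \id$ and $\Hom(L_1, L_0) = 0$ by directedness, most potential inputs to the products are simply not present.

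First I would dispose of the differential $m_1$. By Lemma \ref{floer-iso} the Floer cohomology $\mathit{HF}^*(L_0, L_1) \approx H^*(S^1; \mathbb R)$ has total rank $2$, matching the rank of the underlying Floer complex $\mathit{CF}^*(L_0, L_1) = R^{|L_0 \cap L_1|}$ with its two generators $x_0, x_1$. Since cohomology and cochains have the same rank, the differential $m_1$ must vanish identically. This is the content linking Lemma \ref{floer-iso} to the grading lemma, and it is essentially a dimension count once the Floer cohomology is identified.

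Next I would treat the higher products $m_k$ for $k \geq 2$. For any $m_k$ with $k \geq 2$, the signature of the $A_\infty$ operation requires $k$ composable morphisms $\Hom(L_{i_0}, L_{i_1}) \otimes \cdots \otimes \Hom(L_{i_{k-1}}, L_{i_k})$. Directedness forces each factor to be either an identity (when $i_{j-1} = i_j$) or the morphism from $L_0$ to $L_1$ (when $i_{j-1} < i_j$). A chain of genuinely nonidentity composable arrows would need strictly increasing indices $i_0 < i_1 < \cdots < i_k$, but there are only two objects, so at most one nonidentity arrow can appear in any composable sequence. Hence for $k \geq 3$ at least two of the factors must be identities; and for $k = 2$ either both inputs are nonidentity (impossible, as just argued) or at least one is an identity. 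The only surviving operations are therefore $m_2(\,\cdot\,, \id)$ and $m_2(\id, \,\cdot\,)$, which equal the given morphism by the $A_\infty$ unit axioms.

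The main obstacle, and the step requiring genuine geometric input rather than formal bookkeeping, is justifying that there are no nonvanishing $m_k$ contributions coming from holomorphic polygons in the regular fibre $X_0 \cong \mathbb C^*$ that could in principle modify $m_2$ on the two nonidentity-free inputs or produce corrections. Here I would appeal to the explicit model: $X_0$ is symplectomorphic to $T^*S^1$ with $L_0, L_1$ isotopic Lagrangian circles, and for the cotangent bundle of $S^1$ the Floer theory is classical (as cited via \cite{Au} and \cite{FOOO}), so the $A_\infty$ structure on $\Hom(L_0,L_1)$ is formal. Combined with the combinatorial directedness argument above, this forces all products to vanish except the unital $m_2$, completing the proof.
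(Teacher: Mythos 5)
Your proof is correct, but the mechanism you use to kill the higher products is genuinely different from the paper's. The paper never invokes the directedness combinatorics: it identifies the strict unit with the degree-$0$ generator $x_0$ and then argues purely by degree, noting that $m_2(x_1,x_1)$ and more generally $m_k(x_1,\dotsc,x_1)$ would have degree $2-k+k=2$, which exceeds the top degree $1$ of the morphism spaces, while strict unitality disposes of every product containing $x_0$. In other words, the paper proves that the fibre-level $A_\infty$ algebra $H^*(S^1)$ is intrinsically formal --- an argument that survives in settings where compositions of non-identity morphisms actually exist (more than two vanishing cycles, or the non-directed Fukaya category of the fibre). Your route instead observes that with only two objects a weakly monotone sequence $i_0\le i_1\le\dotsb\le i_k$ in $\{0,1\}$ contains at most one strict increase, so every $m_k$ with $k\ge 2$ has an identity input and is governed by the unit axioms; this is more elementary and uses nothing about the gradings of $x_0,x_1$ beyond the statement of the Hom spaces, but it leans entirely on the strict unitality convention for the directed category and on there being exactly two objects, so it does not generalize to the cases the paper has in view (e.g.\ $\mathfrak{sl}(n,\mathbb C)$ with more critical points). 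Both treatments of $m_1$ coincide: the rank of $\mathit{HF}^*(L_0,L_1)\approx H^*(S^1;\mathbb R)$ equals the rank of the Floer complex, so the differential vanishes. One remark: your final paragraph is redundant given your second --- once directedness rules out composable pairs of non-identity morphisms, there are no holomorphic-polygon counts left to control, and the ``formality of $T^*S^1$'' you appeal to there is precisely what the paper's degree argument establishes; presenting it as the essential geometric step slightly misstates where the content of the proof lies.
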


Here, the strict unit $\id$ equals $x_0$, and the result follows from
strict unitality and the degree considerations. Specifically,
$m_2 (x_1, x_1)$ has degree $2$ and so it is zero.
Moreover, strict unitality implies that the only possible non-zero products for $k > 2$
take only $x_1$ as argument, and $m_k (x_1, \dotsc, x_1)$ is zero because it has
degree $2 - k + k = 2$.

\begin{remark}
We compare with the mirror of $\mathbb{P}^1$. 
The Fukaya--Seidel
category we just described is not isomorphic to the Fukaya--Seidel category
of the mirror of $\mathbb{P}^1$ described in \cite{AKO1}. Indeed, although
the number of objects, morphisms and products of the $A_\infty$ structures
coincide, the gradings are different, hence the categories are not equivalent. 
We give a more detailed argument for this in the proof of Theorem \ref{nopmirror}.
\end{remark}

\section{Mirror candidates}\label{mirror}

We show that no projective variety is mirror to $\mathrm{LG}(2)$.
In other words, suppose that we have a variety $Y$ such that the bounded derived
 category $D^b (\Coh Y)$ of
 coherent sheaves on $Y$ is equivalent to our Fukaya--Seidel category of Theorem \ref{teosl2}. 
Thus, we would need to have that $D^b(\Coh Y)$ is generated by some $\mathcal F_0, \mathcal F_1 \in \Coh Y$ satisfying:
\begin{align*}
\Hom (\mathcal F_i, \mathcal F_j) 
\simeq
\begin{cases}
\mathbb C \oplus \mathbb C [-1] & i < j \\
\mathbb C                       & i = j \\
0                               & i > j.
\end{cases}
\end{align*}
We prove that any such variety $Y$ cannot be projective. Hence:

\begin{theorem}
\label{nopmirror}
$\mathrm{LG}(2)$ has no projective mirrors.
\end{theorem}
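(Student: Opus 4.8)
The plan is to turn a hypothetical projective mirror into a full exceptional collection of length two, show that this forces the mirror to be $\mathbb{P}^1$, and then extract a contradiction from the \emph{grading} of the morphism spaces, which is exactly the feature that the remark above flags as distinguishing $\mathrm{LG}(2)$ from the mirror of $\mathbb{P}^1$. So suppose $Y$ is a projective mirror, i.e.\ $D^b(\Coh Y) \simeq \Fuk(\mathrm{LG}(2))$ and is generated by $\mathcal F_0,\mathcal F_1$ with the $\Hom$-groups displayed above (now over $\mathbb C$). From $\Hom(\mathcal F_i,\mathcal F_i)\simeq\mathbb C$ concentrated in degree $0$ I read off $\End(\mathcal F_i)=\mathbb C$ with no higher self-extensions, so each $\mathcal F_i$ is exceptional; from $\Hom(\mathcal F_1,\mathcal F_0)=0$ in all degrees the pair is semiorthogonal, and since it generates, $(\mathcal F_0,\mathcal F_1)$ is a full exceptional collection of length $2$. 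I take $Y$ smooth, which is the natural setting for a mirror and is precisely what guarantees the Serre functor on $D^b(\Coh Y)$ used below.

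Next I would identify $Y$ using the standard structural package for full exceptional collections on a smooth projective variety. Such a collection induces a semiorthogonal decomposition into two copies of $D^b(\mathrm{Vect}_{\mathbb C})$, and by additivity of Hochschild homology one gets $HH_0(Y)=\mathbb C^2$ and $HH_m(Y)=0$ for $m\neq 0$. Via Hochschild--Kostant--Rosenberg, $HH_m(Y)=\bigoplus_{q-p=m}H^q(Y,\Omega^p_Y)$, so all off-diagonal Hodge numbers vanish and $\sum_p h^{p,p}(Y)=2$. Poincaré/Serre duality gives $h^{0,0}=h^{d,d}=1$ with $d=\dim Y$; for $d\geq 1$ these two terms already exhaust the total, forcing every remaining $h^{p,p}=0$. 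Since $h^{1,1}(Y)\geq 1$ as soon as $d\geq 2$ (an ample class lives there), this rules out $d\geq 2$, leaving $d=1$, and then $Y$ is a genus-$0$ curve, i.e.\ $Y\cong\mathbb P^1$.

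Finally I would derive the contradiction on $\mathbb P^1$. Since $\mathbb P^1$ is a curve, $D^b(\Coh\mathbb P^1)$ is hereditary, so every object is a sum of shifts of its cohomology sheaves and every exceptional object is a shift $\mathcal O(m)[k]$ of a line bundle. Hence $\mathcal F_0\cong\mathcal O(a)[k_0]$ and $\mathcal F_1\cong\mathcal O(b)[k_1]$, and $\Hom^{\bullet}(\mathcal F_0,\mathcal F_1)$ is the shift by $k_1-k_0$ of $H^0(\mathbb P^1,\mathcal O(b-a))\oplus H^1(\mathbb P^1,\mathcal O(b-a))[-1]$. On $\mathbb P^1$ at most one of $H^0$ and $H^1$ of a line bundle is nonzero, so this graded space is concentrated in a \emph{single} degree. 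But the prescribed pattern $\mathbb C\oplus\mathbb C[-1]$ is spread over two consecutive degrees, a contradiction. Therefore no projective $Y$ can be mirror to $\mathrm{LG}(2)$.

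The hardest part is the reduction in the second paragraph: cleanly forcing $Y\cong\mathbb P^1$ while controlling the dimension and excluding degenerate candidates. This leans on the full structural toolkit for exceptional collections (freeness of $K_0$, Hodge--Tate vanishing through Hochschild homology, and smoothness/properness for the Serre functor), so I would be careful to state exactly which of these inputs I invoke. I would also address whether ``projective variety'' is meant to allow singularities: if so, I would either restrict to smooth mirrors (the relevant homological mirror symmetry setting) or argue that the transported Serre functor forces $Y$ to be smooth before running the Hodge-theoretic step.
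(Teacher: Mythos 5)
Your smooth-case argument is correct, and it reaches $\mathbb P^1$ by a genuinely different route from the paper's. Where you use additivity of Hochschild homology under semiorthogonal decomposition plus HKR to kill all Hodge numbers except $h^{0,0}=h^{d,d}=1$, forcing $d=1$ and genus $0$, the paper uses the Grothendieck group: a decomposition into $m$ exceptional pieces gives $G_0(Y)\cong\mathbb Z^m$, while $\dim G_0(Y)\ge n+1$ for an $n$-dimensional projective variety (citing Wei), so $n=1$. The endgame on $\mathbb P^1$ is essentially the same in both proofs: the cohomology of a line bundle on $\mathbb P^1$ is concentrated in a single degree, so no pair of the relevant objects can produce the two-degree pattern $\mathbb C\oplus\mathbb C[-1]$. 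One small simplification on your side: because you work with exceptional objects rather than the paper's simple objects, skyscraper sheaves are excluded automatically (they have nontrivial self-$\Ext^1$), whereas the paper must treat them case by case. The trade-off is that the K-group argument needs no smoothness hypothesis, while the Hochschild--HKR argument identifies $Y\cong\mathbb P^1$ directly among \emph{smooth} varieties only.

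That smoothness assumption is the genuine gap. The theorem asserts that no projective variety is a mirror, and the paper's proof spends most of its effort precisely on the singular cases your argument excludes: curves whose normalization has geometric genus $\ge 1$ (where $G_0(Y)$ fails to be finitely generated) and singular rational curves (where a categorical resolution forces an exceptional collection of length at least $3$). You flag the issue but leave it unresolved, offering either to restrict the statement to smooth mirrors (which proves a strictly weaker theorem) or to derive smoothness from the transported Serre functor. The second option is the right idea, but it should be phrased via hom-finiteness rather than the Serre functor, whose very definition presupposes hom-finiteness. Concretely: $\Fuk(\mathrm{LG}(2))$ is proper (the Hom's between the two generators are finite-dimensional and bounded, and this propagates to all objects obtained by shifts, cones and summands); properness is invariant under triangulated equivalence; and if $Y$ is projective with a singular point $p$, then $\bigoplus_i \Ext^i_Y(\mathcal O_p,\mathcal O_p)$ is infinite-dimensional by Auslander--Buchsbaum--Serre, so $D^b(\Coh Y)$ is not proper. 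With that lemma stated and proved, your Hochschild-homology argument legitimately replaces the paper's K-theoretic and singular-curve analysis, and your proof becomes complete at the theorem's full strength.
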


\begin{proof}
We first argue that if $\dim Y = n > 1$ and $Y$ is projective, then $D^b (\Coh Y)$ cannot be generated by two simple objects $\mathcal L_0, \mathcal L_1$ such that $\Hom (\mathcal L_i, \mathcal L_i) = \mathbb C$ for $i = 0, 1$.

We will use the following facts. First, if $\mathcal C$ is an abelian category, such as $\Coh Y$ for any scheme $Y$, then the Grothendieck group $K (\mathcal C)$ of $\mathcal C$ is isomorphic to the Grothendieck group $K (D^b (\mathcal C))$ of the bounded derived category of $\mathcal C$. Recall that the Grothendieck group in either case is generated by the isomophism classes of objects in the respective category. The relations in the first case are given by short exact sequences\footnote{namely, if $0 \to A \to B \to C \to 0$ is a short exact sequence, then $[B]=[A]+[C]$ in $K(\mathcal C)$}, while in the latter by exact triangles, see \cite[Ex.\,1.27]{KS}. 

Second, if $\langle \mathcal A, \mathcal B \rangle$ is a semi-orthogonal decomposition of a triangulated category $\mathcal D$, {\it e.g.}\ $\mathcal D = D^b (\Coh Y)$, then $K (\mathcal D) = K (\mathcal A) \oplus K (\mathcal B)$. Note that the Grothendieck group of a triangulated category is defined in the obvious way: the generators are the isomorphism classes of objects, the relations come from exact triangles. 

Last, if $D^b (\Coh Y)$ admits a semi-orthogonal decomposition by sheaves $\mathcal F_1, \dotsc, \mathcal F_m$ together with another factor $\mathcal A$, that is,
\[
D^b (\Coh Y) = \langle \mathcal F_1, \mathcal F_2, \dotsc, \mathcal F_m, \mathcal A \rangle,
\]
then
\[
G_0 (Y) := K (\Coh Y) = K (D^b (\Coh Y)) = K (\mathcal F_1) \oplus \dotsb \oplus K (\mathcal F_m) \oplus K (\mathcal A),
\] 
where by $K (\mathcal F_i)$ we mean the Grothendieck group of the full triangulated category generated by $\mathcal F_i$, each of which is isomorphic to $\mathbb{C}$. (We assume $Y$ is a scheme over the complex numbers, but this works over any field.) Thus, $\dim G_0 (Y) \geq m$, as claimed (or use \cite[Prop.\,2.1]{wei}). Since $\dim G_0 (Y) \geq n+1$, we get $n=1$.

Assume now that $n = 1$. If the normalization $Y'$ of $Y$ has geometric genus $\ge 1$, then \cite[Prop.\,4.6]{wei} gives that $G_0 (Y)$ is not finitely generated.
If $Y' = \mathbb {P}^1$ and $Y \ne \mathbb {P}^1$, then \cite[Prop.\,4.1]{wei} gives that a categorical resolution (in the sense of \cite{kl}) of $D^b (\Coh Y)$ has a full exceptional collection, but the proof of \cite[Prop.\,4.1]{wei} gives that its length $m$ is at least $3$. Hence $G_0 (Y) \ne \mathbb Z^2$.

Finally, we exclude the case $Y = \mathbb {P}^1$. Assume $Y = \mathbb {P}^1$ and that $\mathcal L_0$ and $\mathcal L_1$ are simple objects of $D^b (\Coh \mathbb {P}^1)$. Since $\mathbb {P}^1$ is a smooth curve, every coherent sheaf $\mathcal F$
on $\mathbb {P}^1$ is a direct sum of a torsion sheaf $\mathrm{Tors}(\mathcal F)$ and a locally free sheaf $\mathcal F\! \big/ \mathrm{Tors}(\mathcal F)$. Every locally free sheaf is isomorphic to a direct sum of line bundles.
Every torsion sheaf is a direct sum of skyscraper sheaves $\mathcal {O}_p$, $p \in \mathbb {P}^1$. Hence the only simple coherent sheaves on $\mathbb {P}^1$ are the line bundles $\mathcal {O} _{\mathbb {P}^1}(t)$,
$t\in \mathbb {Z}$, and the sheaves $\mathcal {O}_p$, $p\in \mathbb {P}^1$. No pair of them, not even after a shift $\mathcal L_0 [-i]$, $\mathcal L_1 [-j]$ may be of this form:
if $p, q\in \mathbb {P}^1$ and $p\ne q$, then $\Ext^i (\mathcal O_p, \mathcal O_q) = 0$ for all $i$, either $\Hom (\mathcal R, \mathcal L) = 0$ or $\Ext^1 (\mathcal L, \mathcal R)=0$ for any line bundles $\mathcal L, \mathcal R$,
$\Hom (\mathcal {O}_p, \mathcal L)= 0$ and $\dim \Ext^1(\mathcal {O} _p, \mathcal L) =1$ for all $p \in \mathbb {P}^1$ and any line bundle $\mathcal L$. Since $\mathbb {P}^1$ is a smooth curve,
\cite[Prop.\,6.3]{gkr} gives that every simple element of $D^b (\mathbb {P}^1)$ is isomorphic to some $\mathcal F [-i]$ with $\mathcal F$ a simple coherent sheaf on $\mathbb {P}^1$.
\end{proof}

We now proceed to the task of compactifying our Landau--Ginzburg model and verifying the 
effect of compactification on the Fukaya--Seidel category. 

\section{Compactification of the orbit}\label{comp}

Recall that the orbit $X$ is an affine surface in $\mathbb C^3$, as described in (\ref{eq:sl2orbit}). We will 
embed it into a projective surface $\overline X$, and see that the natural choice 
is $\overline X = \mathbb P^1 \times \mathbb P^1$. 
We compactify $X$ by homogenizing equation (\ref{eq:sl2orbit}).
This produces the projective surface $\overline X$ cut out by 
$x^2+yz-t^2=0$ in $\mathbb P^3$, that can be taken to the standard quadric equation by 
 the change of coordinates $x \mapsto x-t$ and $t \mapsto x+t$, 
hence the surface is $\mathbb P^1 \times \mathbb P^1$. 
This compactification also works well from the symplectic point of view. Thus, we have:

\begin{theorem} \label{symplectic}
The semisimple adjoint orbit $(X, \Omega)$ of $\mathfrak{sl}(2,\mathbb C)$ compactifies holomorphically 
and symplectically to $\mathbb P^1 \times \mathbb P^1$.
\end{theorem}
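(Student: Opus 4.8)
The plan is to establish the holomorphic and symplectic assertions separately, interpreting the latter as the claim that there is a symplectic form $\omega$ on $\mathbb P^1 \times \mathbb P^1$ for which $(X, \Omega)$ embeds symplectically as a dense open subset. The holomorphic part is essentially the computation preceding the statement: I would homogenize (\ref{eq:sl2orbit}) to $\overline X = \{x^2 + yz - t^2 = 0\} \subset \mathbb P^3$, apply $x \mapsto x - t$, $t \mapsto x + t$ to reach the standard quadric, and invoke the Segre isomorphism identifying a smooth quadric surface in $\mathbb P^3$ with $\mathbb P^1 \times \mathbb P^1$. The boundary $D := \overline X \setminus X = \overline X \cap \{t = 0\}$ is cut out by $x^2 + yz = 0$ in $\mathbb P^2$; as its partials vanish only at the origin, $D$ is a smooth conic, hence a rational curve $\cong \mathbb P^1$ lying in $\lvert (1,1) \rvert$.

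For the symplectic statement I would first record two facts about $\Omega$ on $X$. It is a K\"ahler form for the induced complex structure, since $\mathcal H(u,v) = \langle u, Jv\rangle$ has positive-definite real part; and it is exact, because Lemma \ref{lag} makes $Y \cong S^2$ Lagrangian, so $\int_{[Y]} \Omega = 0$, while $[Y]$ generates $H_2(X;\mathbb R) \cong H_2(S^2;\mathbb R)$, forcing $[\Omega] = 0$ in $H^2(X;\mathbb R)$. Next I would put on $\mathbb P^1 \times \mathbb P^1$ a symmetric product Fubini--Study form $\omega$, whose K\"ahler class is a positive multiple of the Poincar\'e dual of $[D]$. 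The Gysin sequence for the inclusion $X = \overline X \setminus D$ reads $H^0(D) \to H^2(\overline X) \to H^2(X) \to H^1(D)$; since $H^1(\mathbb P^1) = 0$, the restriction $H^2(\overline X;\mathbb R) \to H^2(X;\mathbb R)$ is surjective with kernel spanned by the Poincar\'e dual of $[D]$. Hence $[\omega|_X] = 0 = [\Omega]$, so $\Omega$ and $\omega|_X$ are cohomologous; moreover, both being K\"ahler for the same complex structure, every convex combination $\omega_s = (1-s)\Omega + s\,\omega|_X$ is again K\"ahler, in particular non-degenerate.

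It then remains to promote this to a symplectomorphism $(X,\Omega) \simeq (X,\omega|_X)$ by a Moser deformation along $\omega_s$: writing $\omega|_X - \Omega = d\eta$ and solving $\iota_{V_s}\omega_s = -\eta$, the time-$1$ flow of $V_s$ is the desired map. The main obstacle is that $X$ is noncompact, so this flow need not be complete and need not respect the end. To overcome it I would examine both forms in a collar of $D$: under the diffeomorphism $X \cong T^*S^2$ each carries a Liouville structure at infinity, and by normalizing both to a common model near $D$ I can choose the primitive $\eta$ to vanish there. With $\eta$ supported away from $D$ the Moser vector field is complete, and its time-$1$ map gives the required symplectomorphism, exhibiting $(X,\Omega)$ as a dense open symplectic submanifold of $(\mathbb P^1 \times \mathbb P^1, \omega)$.
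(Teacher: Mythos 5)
Your holomorphic half is fine and agrees with the computation the paper makes just before the theorem (homogenize (\ref{eq:sl2orbit}), change coordinates to the standard quadric, apply Segre); the paper's own proof reaches the same conclusion differently, via the identification $X \simeq \mathcal Z_2(\tau)$ and the deformation of $F_2$ to $F_0 \simeq \mathbb P^1 \times \mathbb P^1$, with $X$ realized as the complement of the ample divisor $E+F$. Your cohomological bookkeeping is also correct: $[\Omega] = 0$ by Lemma \ref{lag}, and $[\omega|_X] = 0$ by the Gysin sequence. The genuine gap is the final Moser step, and it is not a fixable technicality: the symplectomorphism $(X,\Omega) \simeq (X,\omega|_X)$ you are trying to build does not exist. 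The form $\Omega$ is the restriction to $X$ of a constant-coefficient (translation-invariant) K\"ahler form on $\mathfrak{sl}(2,\mathbb C) \cong \mathbb C^3$, and $X$ is a closed, unbounded complex surface in $\mathbb C^3$; by Lelong's monotonicity inequality every Euclidean unit ball centred at a point of $X$ contains a uniform positive amount of $\Omega$-volume, and since $X$ is unbounded it contains infinitely many disjoint such balls, so $\int_X \Omega \wedge \Omega = \infty$. On the other hand $\int_X (\omega|_X) \wedge (\omega|_X) \le \int_{\mathbb P^1\times\mathbb P^1} \omega \wedge \omega < \infty$. Total volume is preserved by any symplectomorphism, so $(X,\Omega)$ is not symplectomorphic to $(X,\omega|_X)$ --- indeed not to any open subset of any closed symplectic $4$-manifold. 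This is exactly where your sketch breaks: there is no ``common model near $D$'' to normalize to, since near the end of $X$ one form carries infinite volume and the other finite volume (the end of $(X,\Omega)$ is convex/Liouville, while the end of $(X,\omega|_X)$ is concave); consequently no primitive $\eta$ of $\omega|_X - \Omega$ supported away from $D$ can exist. Exactness of both classes, which is all your Gysin argument gives, is far too weak an invariant on a noncompact manifold to run Moser.

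This also explains why the paper does not attempt your construction. Its proof never produces a global symplectic identification: it establishes the complex-geometric picture by deformation theory and then asserts compatibility of $\Omega$ with the compactification, and the operative notion of ``symplectic compactification'' actually used in the sequel (see the discussion before \S\ref{ratext} and the proof of Lemma \ref{seidel}) is the much weaker requirement that the symplectic form on the compact surface can be chosen to coincide with $\Omega$ on a neighborhood of the thimbles --- a compact region --- which suffices to leave the Fukaya--Seidel category unchanged. The strong reading you adopted (a dense open symplectic embedding of all of $(X,\Omega)$ into $(\mathbb P^1\times\mathbb P^1,\omega)$) is precisely the statement ruled out by the volume obstruction, so any correct proof must either weaken the symplectic claim in this way or modify $\Omega$ away from the relevant Lagrangians.
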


 \begin{proof}

Recall from \S \ref{structures} that we may identify the complex structure of $X$ with that of a nontrivial deformation $\mathcal Z_2 (\tau)$ of $Z_2 = \Tot (\mathcal O_{\mathbb P^1} (-2))$. In fact, the deformation of $Z_2$ extends to a deformation of its natural compactification, the second Hirzebruch surface $F_2$ obtained from $Z_2$ by adding a line at infinity, an irreducible divisor with self-intersection $+2$. It is well known that the complex surface $F_2$ deforms to the Hirzebruch surface $F_0 \simeq \mathbb P^1\times \mathbb P^1$. Identifying $Z_2$ as a subset of $F_2$, this deformation corresponds to a nontrivial element $\tau \in H^1 (F_2, T F_2) \simeq H^1 (Z_2, T Z_2)$.

Under deformation of $F_2$, the added line at infinity decomposes into the sum $E + F$ of two divisors $E, F$ corresponding in the deformed surface $F_0 = \mathbb P^1 \times \mathbb P^1$ to the fibre and the zero section of $F_0$ (considered as the trivial $\mathbb P^1$-bundle over $\mathbb P^1$). The divisor $E + F$ is ample, and its complement is the affine variety $\mathcal Z_2 (\tau) \simeq X$.

Thus, the complex structure of $X$ agrees with the one inherited from $F_0$, and similarly the metric on $X$ agrees with the K\"ahler metric inherited from $F_0$. These together imply that there exists a unique compatible symplectic structure on $X$ fitting the compactification to $F_0$. On the other hand, it is clear from definition \ref{symp} that the symplectic structure $\Omega$ on $X$ is compatible with the complex structure on $\mathfrak{sl}(2,\mathbb C)$. Hence, the symplectic structure on $F_0$ restricts to $\Omega$ on $X$.
\end{proof}
 
Let us identify the compactified fibres of the Landau--Ginzburg model and the divisor at infinity. 
As seen in (\ref{potential}) the potential on the open orbit $X$ is $f_H (A) = 2x$ and it has critical values $\pm 2$. Thus, $0$ is a regular value, and we express the regular fibre over $0$, $X_0$, as the affine variety in $\{ (y,z) \in \mathbb C^2 \}$ cut out by the equation
\[
yz - 1 = 0
\]
since it must satisfy equation (\ref{eq:sl2orbit}) and $x = 0$. As with the orbit, we homogenize this equation and embed the fibre into the corresponding projective variety $\overline{X_0}$ cut out by the equations $x=0$ and $yz-t^2=0$ in $ \mathbb P^3$.  Here the complement of the orbit $\overline{X} \setminus X$ in the compactification is obtained by making $t = 0$, thus $x^2 - yz = 0$ inside a projective plane $\mathbb P^2$, hence a conic curve, that is, a $\mathbb P^1$.

Next we need to compactify the potential. We will first extend the potential as a rational map over $\overline {X}$ and this rational map will then give rise to a holomorphic map on a compactification $\overline\Gamma$. We shall choose the symplectic form on $\overline\Gamma$ such that it coincides with the original symplectic form on $X$ on an open neighborhood of its thimbles, thus keeping the Lagrangians
we used to build the Fukaya category. 

\section{The potential viewed as a rational map}\label{ratext}

Our goal now is to extend the potential to the compactification. We will make use of another incarnation of the orbit, namely the adjoint orbit of $e_1 \otimes \varepsilon_1$ in $\mathbb C^2 \otimes (\mathbb C^2)^*$. The various incarnations of the orbits are described for the general case in \cite[\S 4]{BGGSM}.
Here we will describe explicitly the isomorphism between two such incarnations for the case of $\mathfrak{sl}(2,\mathbb C)$, then we will use the tensor product version of the orbit to show that the compactification naturally induces the Segre embedding into $\mathbb P^3$. Our extension of the potential to a rational map on $\mathbb P^1 \times \mathbb P^1$ factors through the Segre embedding. Note that the potential does not extend to a holomorphic map, not even if we change the target to $\mathbb P^1$. In \cite[\S 6]{BGGSM} it is shown how to extend the potential to a rational map for the cases when the orbit is diffeomorphic to $T^*\mathbb P^n$; all other cases remain open. 

Let us first set up some notation. 
For this section we write $A\in \SL (2, \mathbb{C})$ as
\begin{equation}\label{sl2-grupo}
A= 
\begin{pmatrix}
x&z\\
y&w
\end{pmatrix},
\end{equation}
with $wx-yz=1$, and fix the following basis for the Lie algebra $\mathfrak{sl}(2,\mathbb{C})$:
\begin{equation}
\label{base-sl2}
H=
\begin{pmatrix}
1&0\\
0&-1
\end{pmatrix},
\qquad
X_\alpha=
\begin{pmatrix}
0&1\\
0&0
\end{pmatrix},
\qquad
X_{-\alpha}=
\begin{pmatrix}
0&0\\
1&0
\end{pmatrix}.
\end{equation}
We consider the representation of the group $\rho \colon \SL(2,\mathbb{C}) \rightarrow \GL(\mathbb{C}^2)$ by left multiplication
\[
\rho(A)v = Av
\]
and its dual representation $\rho^\ast\colon \SL(2,\mathbb{C}) \rightarrow \GL(\mathbb{C}^2)^\ast$ given by
\[
\rho^\ast (A) \varepsilon = \varepsilon \circ A^{-1}.
\]
We denote by $\theta := d_e \rho$ the corresponding representation of the Lie algebra $\mathfrak{sl} (2, \mathbb C)$.

Let $\alpha$ be the positive root of $\mathfrak{sl}(2,\mathbb{C})$, that is, $\alpha = \lambda_1 - \lambda_2$, where $\lambda_i$ is the functional $\lambda_i (\diag(x_1,x_2)) = x_i$, $i = 1, 2$.
The fundamental weight for $\theta\colon \mathfrak{sl}(2,\mathbb{C}) \rightarrow \mathfrak{gl} (\mathbb{C}^2)$ is
$\mu=\frac{1}{2}\alpha$, and the corresponding element in the Cartan subalgebra is 
\[
H_\mu=
\begin{pmatrix}
\frac12 &     0    \\
   0    & -\frac12
\end{pmatrix}.
\]
Consider the canonical basis $\{ e_1, e_2 \}$ of $\mathbb C^2$. The weight spaces of the representation $\theta$ are:
 $V_1=\mathrm{span}\{e_1\}$ and $V_{-1}=\mathrm{span}\{e_2\}$. Recall that $\theta(X_\alpha)$ maps $V_{-1}$ to $V_1$ 
 and that $\theta(X_{-\alpha})$ maps $V_1$ to $V_{-1}$. Explicitly, 
\begin{align*}
\theta(X_\alpha)\begin{pmatrix}
a \\
b
\end{pmatrix}
=
\begin{pmatrix}
b \\
0
\end{pmatrix}, \qquad
\theta(X_{-\alpha})
\begin{pmatrix}
a \\
b
\end{pmatrix}
=
\begin{pmatrix}
0 \\
a
\end{pmatrix}.
\end{align*}
We set $v_1 = (1,0) \in \mathbb{C}^2$ and $\varepsilon_1 = (1,0) \in (\mathbb{C}^2)^\ast$.

If $A\in \SL(2,\mathbb{C})$ is written as in (\ref{sl2-grupo}), then
\[
B = \Ad(A) H_\mu = A H_\mu A^{-1} =
\begin{pmatrix}
\frac{1}{2} (wx+yz) & -xz \\
yw &-\frac{1}{2} (wx+yz)
\end{pmatrix}.
\]
The eigenvectors of $B$ are $(x,y)$, associated to the eigenvalue $\frac{1}{2}$, and $(z,w)$, associated to the eigenvalue $-\frac{1}{2}$.

\begin{lemma}
The adjoint action on the tensor product expression of the orbit can be interpreted as the Segre embedding. 
\end{lemma}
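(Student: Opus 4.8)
The plan is to make explicit the dictionary between $\mathbb{C}^2 \otimes (\mathbb{C}^2)^\ast$ and $\End(\mathbb{C}^2) = \mathrm{Mat}_2(\mathbb{C})$, under which a decomposable tensor $v \otimes \varepsilon$ (with $v$ a column and $\varepsilon$ a row) is sent to the rank-one outer product $v\varepsilon$. Under this identification the decomposable (pure) tensors are exactly the rank-one matrices, and their projectivization is the image of the Segre embedding $\mathbb{P}^1 \times \mathbb{P}^1 \hookrightarrow \mathbb{P}^3 = \mathbb{P}(\End \mathbb{C}^2)$. First I would record that the base point corresponds to a matrix in the orbit: the outer product gives $e_1 \otimes \varepsilon_1 = \left(\begin{smallmatrix}1&0\\0&0\end{smallmatrix}\right) = H_\mu + \tfrac{1}{2}I$, where $I$ is the $2\times2$ identity.

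The conceptual heart is a single intertwining identity. Since the action of $A \in \SL(2,\mathbb{C})$ on the tensor product is $\rho(A)\otimes\rho^\ast(A)$, one has $A\cdot(v\otimes\varepsilon) = (\rho(A)v)\otimes(\rho^\ast(A)\varepsilon) = (Av)(\varepsilon A^{-1})$, which as a matrix is precisely $A(v\varepsilon)A^{-1}$. Thus $\rho\otimes\rho^\ast$ is intertwined with the adjoint (conjugation) action, so the $\SL(2,\mathbb{C})$-orbit of the tensor $e_1 \otimes \varepsilon_1$ coincides with the adjoint orbit of the matrix $H_\mu + \tfrac12 I$. To make this concrete I would compute $\rho(A)e_1 = Ae_1 = (x,y)$ (the first column of $A$) and $\rho^\ast(A)\varepsilon_1 = \varepsilon_1\circ A^{-1} = (w,-z)$ (the first row of $A^{-1} = \left(\begin{smallmatrix}w&-z\\-y&x\end{smallmatrix}\right)$, using $\det A = 1$), obtaining
\[
A \cdot (e_1 \otimes \varepsilon_1) = (A e_1)(\varepsilon_1 A^{-1}) = \begin{pmatrix} wx & -xz \\ yw & -yz \end{pmatrix} = B + \tfrac{1}{2}I,
\]
where the last equality follows from the formula for $B = \Ad(A)H_\mu$ computed above together with $wx - yz = 1$. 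Since $I$ is fixed by conjugation, this identifies the tensor orbit with $\mathcal O(H_\mu)$ translated by $\tfrac12 I$, that is, with the set of rank-one trace-one matrices.

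The final step is to read off the Segre structure. The orbit map $A \mapsto A\cdot(e_1\otimes\varepsilon_1)$ depends on $A$ only through the pair of lines $\big([A e_1],\,[\varepsilon_1 A^{-1}]\big) \in \mathbb{P}^1 \times \mathbb{P}^1$, and the assignment $(v,\varepsilon) \mapsto v \otimes \varepsilon$ is by definition the Segre map. I would then match the two $\mathbb{P}^1$ factors with the eigendata of $B$: the line $\mathrm{span}(A e_1) = \mathrm{span}(x,y)$ is the $+\tfrac12$-eigenline of $B$, while $\ker(\varepsilon_1 A^{-1}) = \mathrm{span}(z,w)$ is its $-\tfrac12$-eigenline, recovering the eigenvectors listed above. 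Hence the orbit embeds into the Segre variety $\mathbb{P}^1\times\mathbb{P}^1 \subset \mathbb{P}^3$, which is the compactification sought in \S\ref{comp}.

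The individual computations here are entirely routine; the only genuine content is the bookkeeping of the last step, namely recognizing that the orbit parametrizes decomposable tensors and therefore factors through $\mathbb{P}^1\times\mathbb{P}^1$ exactly as the Segre embedding, with the two factors given by the two eigenlines (the full flag) of $B$. I expect the main point requiring care to be stating precisely in what sense the adjoint action \emph{is} the Segre embedding, rather than producing any difficult estimate.
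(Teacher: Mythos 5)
Your proposal is correct and takes essentially the same route as the paper: the same computations $\rho(A)v_1 = (x,y)$ and $\rho^\ast(A)\varepsilon_1 = (w,-z)$, the same outer-product matrix $\left(\begin{smallmatrix} xw & -xz \\ yw & -yz \end{smallmatrix}\right)$, and the same reading of $[x:y]$ and $[z:w]$ as the two projective factors of the Segre map. Your extra framing --- decomposable tensors as rank-one matrices, the intertwining identity $A\cdot(v\otimes\varepsilon) = A(v\varepsilon)A^{-1}$, and the translation $A\cdot(v_1\otimes\varepsilon_1) = \Ad(A)H_\mu + \tfrac{1}{2}I$ --- is a sound clarification of the paper's terser conclusion and in fact anticipates the paper's subsequent Lemma~\ref{dif}, which establishes the identification of the tensor orbit with the adjoint orbit by a separate moment-map argument.
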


\begin{proof}
We have the equality
\[
A \cdot (v_1 \otimes \varepsilon_1) = \rho (A) v_1 \otimes \rho^\ast (A) \varepsilon_1, 
\]
where
\[
\rho (A) v_1 =
\begin{pmatrix}
x & z \\
y & w
\end{pmatrix}
\begin{pmatrix}
1 \\
0
\end{pmatrix}
=
\begin{pmatrix}
x \\
y
\end{pmatrix}
\]
and
\[
\rho^\ast (A) \varepsilon_1 = \varepsilon \circ \rho (A^{-1}) =
\begin{pmatrix}
1 & 0
\end{pmatrix}
\begin{pmatrix}
 w & -z \\
-y &  x
\end{pmatrix}
=
\begin{pmatrix}
w & -z
\end{pmatrix}.
\]
Therefore, 
\begin{equation}
\label{tensor}
A \cdot (v_1 \otimes \varepsilon_1) =
\begin{pmatrix}
xw & -xz \\
yw & -yz
\end{pmatrix}.
\end{equation}
Note that the eigenvalues of (\ref{tensor}) are $0$ (with associated eigenvector $(z,w)$) 
and $1$ (with associated eigenvector $(x,y)$).

If we consider $(x,y)$ and $(z,w)$ as projective coordinates, then the action on the tensor product can be interpreted as
the {\it Segre embedding} of $\mathbb{P}^1 \times \mathbb{P}^1$ into $ \mathbb{P}^3$ (up to a sign), which is
 $([x:y],[z:w]) \mapsto [xz:xw:yz:yw]$.
\end{proof}

The next lemma provides a diffeomorphism between the orbit $\SL(2,\mathbb{C})\cdot (v_1 \otimes \varepsilon_1)$ 
and the adjoint orbit $\Ad(\SL(2,\mathbb{C})) H_\mu$.

\begin{lemma}\label{dif}
The orbit $\SL(2,\mathbb{C})\cdot (v_1 \otimes \varepsilon_1)$ is diffeomorphic to the adjoint orbit $\Ad(\SL(2,\mathbb{C})) H_\mu$.
\end{lemma}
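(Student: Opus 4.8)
The plan is to realize both orbits inside the same ambient space $\mathfrak{gl}(2,\mathbb C)\cong\mathbb C^2\otimes(\mathbb C^2)^\ast$ and to exhibit an explicit affine map carrying one onto the other. First I would identify $\mathbb C^2\otimes(\mathbb C^2)^\ast$ with $\End(\mathbb C^2)$ by sending a pure tensor $v\otimes\varepsilon$ to the rank-one matrix $v\varepsilon$ (outer product). Under this identification $v_1\otimes\varepsilon_1$ becomes $\left(\begin{smallmatrix}1&0\\0&0\end{smallmatrix}\right)$, and the representation-theoretic action $A\cdot(v\otimes\varepsilon)=\rho(A)v\otimes\rho^\ast(A)\varepsilon$ becomes conjugation: since $\rho^\ast(A)\varepsilon=\varepsilon\circ\rho(A^{-1})$ is the row vector $\varepsilon A^{-1}$, the outer product transforms as $(Av)(\varepsilon A^{-1})=A(v\varepsilon)A^{-1}$. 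Thus the $\SL(2,\mathbb C)$-action on the tensor orbit is literally $M\mapsto AMA^{-1}$, the same formula as the adjoint action.

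The key observation is then that the two base points differ by a central scalar. Comparing the explicit matrix (\ref{tensor}) with the displayed formula for $B=\Ad(A)H_\mu=AH_\mu A^{-1}$ and using the constraint $wx-yz=1$ from (\ref{sl2-grupo}), one checks that the off-diagonal entries already agree and that each diagonal entry differs by exactly $\tfrac12(wx-yz)=\tfrac12$; that is,
\[
A\cdot(v_1\otimes\varepsilon_1)=\Ad(A)H_\mu+\tfrac12 I
\]
for every $A\in\SL(2,\mathbb C)$. Equivalently, at the level of base points $v_1\otimes\varepsilon_1=H_\mu+\tfrac12 I$, which is immediate from $H_\mu=\left(\begin{smallmatrix}1/2&0\\0&-1/2\end{smallmatrix}\right)$.

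With this identity in hand I would define $\Phi\colon M\mapsto M-\tfrac12 I$ on $\mathfrak{gl}(2,\mathbb C)$. Since the scalar matrix $\tfrac12 I$ is fixed by conjugation, $\Phi$ intertwines the two actions, so it carries $\SL(2,\mathbb C)\cdot(v_1\otimes\varepsilon_1)$ bijectively onto $\Ad(\SL(2,\mathbb C))H_\mu$, with inverse $N\mapsto N+\tfrac12 I$. Being an affine translation of the ambient space, $\Phi$ is a diffeomorphism, hence restricts to a diffeomorphism between the two orbits, which is exactly the claim; in fact $\Phi$ is even $\SL(2,\mathbb C)$-equivariant and holomorphic.

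The only real bookkeeping to be careful with is the first step: getting the dual representation right so that the tensor action comes out as conjugation rather than, say, $M\mapsto (A^{-1})^\top M A^\top$; a misplaced transpose or inverse there would break the central-shift identity. As a sanity check independent of the explicit translation, one may instead compute stabilizers: both $v_1\otimes\varepsilon_1$ and $H_\mu$ are diagonal with distinct entries, so the isotropy subgroup in either case is the diagonal maximal torus $T$, and both orbits are therefore diffeomorphic to $\SL(2,\mathbb C)/T$.
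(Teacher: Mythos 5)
Your proof is correct, and in substance it constructs the very same map as the paper, though it arrives at it by a different and more self-contained route. The paper proves the lemma by introducing the moment map $M(v\otimes\varepsilon)(Z)=\varepsilon(\theta(Z)v)$ and computing its components against the basis $(H,X_\alpha,X_{-\alpha})$ of (\ref{base-sl2}), concluding that $M(v\otimes\varepsilon)=\Ad(A)H_\mu$ on the orbit. Under the identification of $\mathfrak{sl}(2,\mathbb C)^\ast$ with $\mathfrak{sl}(2,\mathbb C)$ by the trace form, $M(v\otimes\varepsilon)$ is precisely the trace-free part $v\varepsilon-\tfrac12\tr(v\varepsilon)I$ of the outer product, and since $\tr(v\varepsilon)=1$ on the orbit this is exactly your translation $\Phi(M)=M-\tfrac12 I$; your check that the diagonal entries of (\ref{tensor}) and of $\Ad(A)H_\mu$ differ by $\tfrac12(wx-yz)=\tfrac12$ is the same computation as the paper's evaluation of $\langle M(v\otimes\varepsilon),H\rangle$. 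What your version buys is the part the paper leaves implicit: it only exhibits the formula $M(v\otimes\varepsilon)=\Ad(A)H_\mu$ and stops, whereas you justify why this actually yields a diffeomorphism --- $\Phi$ is an $\SL(2,\mathbb C)$-equivariant affine translation of the ambient space with explicit inverse $N\mapsto N+\tfrac12 I$, so it restricts to a bijection, indeed a biholomorphism, between the two orbits. Your stabilizer computation is moreover an independent complete proof on its own: both base points are diagonal with distinct eigenvalues, so both orbits are identified with $\SL(2,\mathbb C)/T$. What the paper's moment-map packaging buys, on the other hand, is conceptual and structural: it is the formulation that ties this lemma to the symplectic framework and that generalizes to the other ``incarnations'' of semisimple orbits of $\mathfrak{sl}(n,\mathbb C)$ treated in \cite{BGGSM}, where the central-shift trick (special to rank-one, trace-one tensors) has no direct analogue.
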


\begin{proof}
The diffeomorphism between the orbits of $\SL(2,\mathbb{C)}$ will be written using the moment map
\begin{equation}
M (v \otimes \varepsilon) (Z) = \varepsilon (\theta (Z) v), 
\end{equation}
where $v \in \mathbb{C}^2, \varepsilon \in (\mathbb{C}^2)^\ast, Z \in \mathfrak{sl} (2,\mathbb{C})$.
Let $v = (x,y)$ and $\varepsilon = (z,w)$. To describe $M (v \otimes \varepsilon)$ in the base (\ref{base-sl2}), we write:
\begin{alignat*}{4}
\langle M(v\otimes\varepsilon)&, H \rangle &{}={}& \varepsilon(\theta(H)v) &{}={}& \varepsilon\big(\tfrac{1}{2}x, -\tfrac{1}{2}y\big)&{}={}&\tfrac{1}{2}(xw+yz) \\
\langle M(v\otimes\varepsilon)&, X_\alpha \rangle &{}={}& \varepsilon(\theta(X_{-\alpha})v) &{}={}& \varepsilon(0,x) &{}={}& {-xz} \\
\langle M(v\otimes\varepsilon)&, X_{-\alpha} \rangle &{}={}& \varepsilon(\theta(X_{\alpha})v) &{}={}& \varepsilon(y,0) &{}={}& yw \text{.}
\end{alignat*}
Therefore, 
\begin{equation}
M (v \otimes \varepsilon) =
\begin{pmatrix}
\frac{1}{2} (wx+yz)& -xz\\
yw & -\frac{1}{2}(wx+yz)
\end{pmatrix}
= \Ad(A) H_\mu.
\end{equation}
\end{proof}

\begin{theorem}\label{rational} The rational map $R_H\colon \overline{X} = \mathbb P^1\times \mathbb P^1 \rightarrow \mathbb P^1$  that extends the potential is 
\[
R_H ([x : y], [z : w]) = [xw + yz : xw - yz].
\]
\end{theorem}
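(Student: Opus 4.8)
\section*{Proof proposal}

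The plan is to pull the potential back to the tensor incarnation of the orbit used in this section, write it in the bihomogeneous coordinates $([x:y],[z:w])$ coming from the Segre embedding, and notice that it is there a quotient of two sections of $\mathcal O(1,1)$ on $\mathbb P^1\times\mathbb P^1$, hence automatically a rational map to $\mathbb P^1$.

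First I would compute the potential on the tensor orbit. By Lemma \ref{dif} an orbit element in tensor form is $M(v\otimes\varepsilon)=\Ad(A)H_\mu$ for $A$ as in (\ref{sl2-grupo}), and its diagonal entry is $\tfrac12(xw+yz)$. Hence the height function $f_H=\langle H,\,\cdot\,\rangle=\tr(H\,\cdot\,)$ evaluates to $xw+yz$ in the coordinates $(x,y,z,w)$. The two expressions $xw+yz$ and $xw-yz$ are each bilinear in the pair $((x,y),(z,w))$, that is, sections of $\mathcal O(1,1)$ on $\mathbb P^1\times\mathbb P^1$, so their ratio defines a rational map to $\mathbb P^1$.

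The crucial point is the normalization $xw-yz=\det A=1$, which holds exactly on the affine orbit: the quantity $\varepsilon(v)=xw-yz$ is preserved by the $\SL(2,\mathbb C)$-action and equals $\varepsilon_1(v_1)=1$ on $\SL(2,\mathbb C)\cdot(v_1\otimes\varepsilon_1)$, as is already visible from the trace of (\ref{tensor}). Therefore on the affine orbit the potential may be rewritten as
\[
f_H=\frac{xw+yz}{xw-yz},
\]
the denominator being identically $1$ there. Projectivizing, this is precisely
\[
R_H([x:y],[z:w])=[xw+yz:xw-yz],
\]
which restricts to $f_H$ on the affine chart $c\mapsto[c:1]$ of $\mathbb P^1$ and so extends the potential, proving the formula.

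Two remarks close the argument. The bookkeeping I expect to require the most care is transporting $f_H$ through both the Segre description and the moment-map identification of Lemma \ref{dif}: the tensor incarnation realizes the orbit as $\mathcal O(H_\mu)$ rather than $X=\mathcal O(H_0)$, and these differ by the overall scaling $\mathcal O(H_0)=2\,\mathcal O(H_\mu)$, under which the potential rescales by the same factor; since this is a harmless affine rescaling of both the orbit and the target $\mathbb C$, it leaves the Lefschetz structure untouched, and one must simply keep track of it so that the numerator comes out as $xw+yz$ on the nose. Finally, to confirm that $R_H$ is genuinely only rational---not holomorphic even as a map to $\mathbb P^1$, as claimed before the statement---I would check that $xw+yz$ and $xw-yz$ vanish simultaneously exactly at the two points $([0:1],[0:1])$ and $([1:0],[1:0])$, the base locus, so that $R_H$ has precisely these two points of indeterminacy.
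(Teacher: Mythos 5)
Your proof follows essentially the same route as the paper's: both pass to the tensor incarnation $A\cdot(v_1\otimes\varepsilon_1)$, use the invariance of $\varepsilon(v)=xw-yz$ (equal to $1$ on the orbit, visible as the trace of (\ref{tensor})) to rewrite the potential as the quotient $\nicefrac{(xw+yz)}{(xw-yz)}$, and projectivize to get $R_H$ factoring through the Segre embedding. Your additional bookkeeping --- the factor-of-two discrepancy between $\mathcal O(H_0)$ and $\mathcal O(H_\mu)$, and the explicit verification that the base locus is exactly $\{P_1,P_2\}$ --- is, if anything, slightly more careful than the paper itself, which passes over the rescaling silently and records the indeterminacy points without computing the common zero locus.
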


\begin{proof}
Choosing $H = \diag (1, -1)$ we wish to extend the potential 
 $f_H$ to a rational map on the compactification \begin{equation} \label{compac}
R_H:\mathbb{P}^1\times \mathbb{P}^1 \dashrightarrow \mathbb{P}^1.
\end{equation}
The rational map $R_H$ that we are looking for is the map to $ \mathbb{P}^1$
associated to $f_H$, that is, the rational map defined on the compactification
 that coincides with $f_H$ in the open orbit. We claim that the extension is given by 
\begin{equation}\label{quotient}
R_H(v\otimes \varepsilon)= \frac{\tr((v\otimes \varepsilon) \theta(H))}{\tr(v\otimes \varepsilon)}=\frac{xw+yz}{xw-yz}.
\end{equation} 
Observe that:
\begin{itemize}
\item If $v \otimes \varepsilon$ belongs to the adjoint orbit, then $v \otimes \varepsilon$ has the form $A\cdot v_1 \otimes \epsilon_1$ for some $A \in \SL (2, \mathbb{C})$, that is, $v \otimes \varepsilon \in \mathbb{C}^2\otimes (\mathbb{C}^2)^\ast$ is a matrix of 
the form (\ref{tensor}).
\item The previous item implies that $\tr (v \otimes \varepsilon) = 1$ if $v \otimes \varepsilon$ are in the orbit. Therefore $R_H = f_H$ on the orbit.
\item The poles of $R_H$ are vectors whose coordinates satisfy $xw=yz$. In other words, $(x,y)$ is a multiple of $(z,w)$. 
These are the pairs that are {\it not} in the adjoint orbit (formed by transversal lines).
\end{itemize}
Therefore, the map defined by formula (\ref{quotient}) factors through the Segre embedding:
\begin{equation}\label{composta}
([x:y],[z:w]) \mapsto [xz: xw: yz: yw] \mapsto [xw+yz:xw-yz].
\end{equation}
and coincides with $f_H$ on the orbit, that is $$([x:y],[z:w])\mapsto [f_H:1].$$ 
$R_H$ is defined on points outside the orbit as $$([x:y],[z:w])\mapsto [2xw: 0],$$ 
{\it except} the points of the base locus $P_1=([1:0],[1:0])$ and $P_2=([0:1],[0:1])$, where the map is ill defined.
\end{proof}

\begin{remark}
The rational map in Theorem \ref{rational} is defined outside the points $P_1$ and $P_2$.
Observe that these points are associated to the nilpotent matrices:
\begin{equation*}
\begin{aligned}
([1:0],[1:0]) \mapsto [1:0:0:0] &\simeq
\begin{pmatrix}
0&1\\
0& 0
\end{pmatrix}
\\
([0:1],[0:1]) \mapsto [0:0:0:1] &\simeq
\begin{pmatrix}
0 & 0 \\
1 & 0
\end{pmatrix}.
\end{aligned}
\end{equation*}
\end{remark}

\section{The compactified $\mathrm{LG}$ model}

In Theorem \ref{rational} we extended the potential to a rational map $R_H\colon \overline{X} = \mathbb P^1\times \mathbb P^1 \rightarrow \mathbb P^1$ as 
\begin{equation}\label{rat}
([x:y],[z:w]) \mapsto [xw+yz:xw-yz].
\end{equation}
However, the map $R_H$ is ill defined at $P_1=([1:0],[1:0])$ and $P_2=([0:1],[0:1])$.
We wish to extend $R_H$ to a holomorphic map and will do so by blowing up.

\begin{notation}
We take coordinates $[r:s]$ on the target $\mathbb P^1$ and 
consider the graph $\Gamma$ of $R_H$ inside the product. We denote by
$\overline{\Gamma}$ the closure of $\Gamma$ in $\overline{X} \times \mathbb P^1$, 
hence $\overline{\Gamma}$ is the surface cut out inside 
$ \mathbb P^1 \times \mathbb P^1\times \mathbb P^1$ by 
$$s(xw+yz) = r(xw-yz).$$
\end{notation}

\begin{lemma}\label{seidel}
$\overline{\Gamma}$ is a holomorphic and symplectic compactification of $X$.
\end{lemma}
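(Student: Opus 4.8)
The plan is to verify the two properties claimed in Lemma \ref{seidel} separately: first that $\overline{\Gamma}$ is a genuine \emph{holomorphic} compactification of $X$ (i.e.\ a smooth projective surface containing $X$ as a Zariski-open subset, on which $R_H$ resolves to an everywhere-defined holomorphic map to $\mathbb P^1$), and second that it is a \emph{symplectic} compactification in the sense required for the Fukaya--Seidel category, namely that it carries a symplectic form restricting to $\Omega$ on a neighbourhood of the thimbles. The geometric heart of the first claim is that $\overline{\Gamma}$, being the closure of the graph of the rational map $R_H \colon \mathbb P^1\times\mathbb P^1 \dashrightarrow \mathbb P^1$, is exactly the blow-up of $\overline X = \mathbb P^1\times\mathbb P^1$ at the two base points $P_1 = ([1:0],[1:0])$ and $P_2 = ([0:1],[0:1])$ identified in Theorem \ref{rational}, with the projection $\overline\Gamma \to \mathbb P^1$ being the resolved potential.

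First I would establish the holomorphic part. Recall from the Notation that $\overline\Gamma \subset \mathbb P^1\times\mathbb P^1\times\mathbb P^1$ is cut out by the single bihomogeneous equation $s(xw+yz) = r(xw-yz)$, with $[r:s]$ the target coordinate. Projecting to the first two factors is an isomorphism away from the locus where the two linear forms $xw+yz$ and $xw-yz$ both vanish — equivalently $xw=yz=0$ — which is precisely $\{P_1,P_2\}$, confirming that $\overline\Gamma$ is the graph closure and hence the blow-up of $\overline X$ at $P_1,P_2$. I would check smoothness of $\overline\Gamma$ by a local computation in affine charts around $P_1$ and $P_2$: in suitable local coordinates the defining equation becomes the standard incidence relation of a blow-up of a point on a smooth surface, so $\overline\Gamma$ is smooth and the two exceptional curves $\widetilde E_1, \widetilde E_2$ are the fibres of the projection over $P_1,P_2$. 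The projection $R_H \colon \overline\Gamma \to \mathbb P^1$ is then holomorphic everywhere by construction, since the indeterminacy has been resolved; it agrees with $f_H$ on $X$ because $R_H$ did so on the open orbit by Theorem \ref{rational}. Thus $X \hookrightarrow \overline X \dashleftarrow \overline\Gamma$ exhibits $X$ as an open subset of the smooth projective surface $\overline\Gamma$, with $R_H$ a holomorphic potential extending $f_H$.

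For the symplectic part, the point is that blowing up is a local operation: away from the exceptional divisors $\widetilde E_1,\widetilde E_2$ the map $\overline\Gamma \to \overline X$ is a biholomorphism, so the K\"ahler form on $\overline X = \mathbb P^1\times\mathbb P^1$ furnished by Theorem \ref{symplectic} (which restricts to $\Omega$ on $X$) pulls back to a closed $2$-form on $\overline\Gamma\setminus(\widetilde E_1\cup\widetilde E_2)$, and a standard symplectic blow-up construction (gluing in a Fubini--Study form supported in a small tubular neighbourhood of the exceptional curves, with a small weight $\epsilon$) produces a genuine symplectic form on $\overline\Gamma$ agreeing with the pullback of $\Omega$ outside a neighbourhood of $\widetilde E_1\cup\widetilde E_2$. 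The essential observation, which I would emphasise, is that $P_1$ and $P_2$ lie on the divisor at infinity $\overline X\setminus X$ and are the base locus of $R_H$, hence \emph{disjoint from the thimbles}, which live over the segment joining the critical values $\pm 2$ inside the open orbit $X$. Therefore the modification can be localised away from the thimbles, and the resulting symplectic form on $\overline\Gamma$ restricts to $\Omega$ on a neighbourhood of the Lagrangians $L_0, L_1$ used to build $\Fuk(\mathrm{LG}(2))$.

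The main obstacle I anticipate is not the existence of a symplectic form per se but making precise the compatibility clause "coincides with the original symplectic form on $X$ on an open neighborhood of its thimbles," since this is what guarantees the Fukaya--Seidel data are preserved under compactification. The care required is twofold: one must confirm that the symplectic blow-up parameter $\epsilon$ can be taken small enough that the support of the modification stays within a neighbourhood of $\widetilde E_1\cup\widetilde E_2$ disjoint from the (compact) thimbles, and one must check that the new potential $R_H \colon \overline\Gamma \to \mathbb P^1$ remains a symplectic Lefschetz fibration near those fibres — but this is automatic since near the thimbles nothing has changed from $\mathrm{LG}(2)$. I would close by noting that the critical points and critical values of $R_H$ inside $X$ coincide with those of $f_H$, so no new vanishing cycles are introduced in the region relevant to the Fukaya--Seidel category.
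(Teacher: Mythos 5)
Your proposal is correct and follows essentially the same route as the paper: identify $\overline{\Gamma}$ as the blow-up of $\overline{X}$ at the two base points $P_1, P_2$ (giving the holomorphic compactification), then repair the degeneracy of the pulled-back form by patching in a correction supported in a neighbourhood of the exceptional set, chosen small enough to miss the thimbles since $P_1, P_2$ lie far from them. The only difference is cosmetic: the paper delegates the patching step to Seidel's construction \cite{Se} for holomorphic Morse functions $\raisebox{.5ex}{$\sigma_0$}\big/\!\raisebox{-.25ex}{$\sigma_1$}$ on a projective variety, whereas you invoke the standard (K\"ahler) symplectic blow-up directly, which amounts to the same gluing.
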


\begin{proof}
By construction $\overline{\Gamma}$ is a complex hypersurface 
of $\mathbb P^1 \times \mathbb P^1 \times \mathbb P^1$ obtained by blowing up 
points on $\overline{X}$. Hence is it clearly a holomorphic compactification of $X$. 
However, pulling back the symplectic form of $\overline{X}$ to $\overline{\Gamma}$
by the blow-up map gives rise to a form that is degenerate on the exceptional set. 
We will now fix the degeneracy. 

As shown in Theorem \ref{symplectic} the symplectic structure on $\overline X = \mathbb P^1\times \mathbb P^1$ is 
compatible with the one on $X$. In Theorem \ref{rational} the potential was extended to a rational map $R_H$ on $\overline X$. We need to adapt the symplectic structure on $\overline \Gamma$ to fit the situation. 
We claim that we have arrived at the situation of \cite[\S 3]{Se} where Seidel considers a holomorphic Morse function $\raisebox{.5ex}{$\sigma_0$}\big/\!\raisebox{-.25ex}{$\sigma_1$}$ defined on a smooth projective variety. In our case we have $\raisebox{.5ex}{$\sigma_0$}\big/\!\raisebox{-.25ex}{$\sigma_1$} = \raisebox{.5ex}{$xy+yz$}\big/\!\raisebox{-.25ex}{$xw-yz$}$ defined on
 $\mathbb P^1\times \mathbb P^1$.
In this situation, we then look at the Lefschetz fibration of hypersurfaces
\[
Y_z = \big\{ p \in X \mathbin{\big\vert} \raisebox{.5ex}{$\sigma_0 (p)$}\big/\!\raisebox{-.25ex}{$\sigma_1 (p)$} = z \big\}
\]
for $z \in \mathbb P^1 = \mathbb C \cup \{\infty\}$.
Note that here $Y_ \infty $ is smooth, as required by \cite{Se}. Thus, we arrived directly at the second stage of his construction, 
where we already have a Lefschetz fibration together with a rational function on it (without having passed by a Lefschetz pencil beforehand). Following his method of patching in a correction to the symplectic form on a small neighborhood of the exceptional set we then arrive at the desired symplectic form. For our purposes it is important to take the neighborhood 
small enough so that it does not intersect the thimbles we had in $X$, but this can be done since the points $P_1$ and $P_2$ where the $R_H$ was ill defined are far from the thimbles of $f_H$.
\end{proof}

We will now use the projection to $[r:s]$ to extend the rational map $R_H$ on $\overline{X}$ 
to a holomorphic map $F_H$ on $\overline{\Gamma}$.

\begin{theorem}
Let $\pi_3\colon \mathbb P^1 \times \mathbb P^1\times \mathbb P^1 \rightarrow \mathbb P^1$ be the projection onto the third factor 
and set $$F_H:= \pi_3\big\vert_{\overline{\Gamma}}.$$ 
Then $F_H $ is a holomorphic extension of $f_H$.
\end{theorem}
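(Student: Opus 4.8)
The plan is to show that $F_H := \pi_3\big\vert_{\overline{\Gamma}}$ is holomorphic and restricts to $f_H$ on the open orbit $X$. First I would verify that $F_H$ is everywhere well defined as a holomorphic map on $\overline\Gamma$. By the preceding Notation, $\overline\Gamma$ is cut out inside $\mathbb P^1 \times \mathbb P^1 \times \mathbb P^1$ by the single bihomogeneous equation $s(xw+yz) = r(xw-yz)$, and it is the closure of the graph $\Gamma$ of the rational map $R_H$. The projection $\pi_3$ onto the third factor is a globally defined holomorphic map on the ambient product, so its restriction $F_H$ to the closed subvariety $\overline\Gamma$ is automatically holomorphic. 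The essential point is that passing from $\overline X$ to its graph $\overline\Gamma$ has resolved the indeterminacy of $R_H$: whereas $R_H$ was ill defined at the base locus points $P_1 = ([1:0],[1:0])$ and $P_2 = ([0:1],[0:1])$ (as recorded in Theorem \ref{rational}), the closure $\overline\Gamma$ assigns to each such point a whole fibre $\{P_i\} \times \mathbb P^1$, on which the third coordinate $[r:s]$ is genuinely free, so that $\pi_3$ is honestly defined there.

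Next I would check the compatibility of $F_H$ with the rational map $R_H$ away from the base locus, and thereby its agreement with $f_H$ on $X$. Over any point $q = ([x:y],[z:w]) \in \overline X$ outside $\{P_1, P_2\}$, the defining equation $s(xw+yz) = r(xw-yz)$ determines $[r:s]$ uniquely, namely $[r:s] = [xw+yz : xw-yz]$, which is exactly the value $R_H(q)$. Hence the fibre of the first two projections over such $q$ is the single point $(q, R_H(q))$, so $\overline\Gamma \to \overline X$ is a biholomorphism away from the exceptional fibres, and under this identification $F_H = \pi_3$ coincides with $R_H$. Restricting further to the open orbit $X$, where by formula (\ref{quotient}) we have $\tr(v \otimes \varepsilon) = 1$ and hence $R_H = f_H$, I conclude $F_H\big\vert_X = f_H$ under the embedding $X \hookrightarrow \overline\Gamma$. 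This is precisely the assertion that $F_H$ is a holomorphic extension of $f_H$.

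I expect the only genuine subtlety, and hence the main obstacle, to lie in the behaviour of $F_H$ along the two exceptional fibres lying over $P_1$ and $P_2$ — that is, in confirming that $\overline\Gamma$ is smooth (or at least that $\pi_3$ is holomorphic) there and in understanding what values $F_H$ takes. One must verify that the closure operation inserts the correct $\mathbb P^1$'s: approaching $P_i$ along different arcs in $\overline X$ yields different limiting values of $[r:s]$, and these limiting values must sweep out the entire fibre $\{P_i\} \times \mathbb P^1$ inside $\overline\Gamma$, consistent with the blow-up description given in Lemma \ref{seidel}. I would handle this by working in local affine charts near each $P_i$: setting, say, $x = 1$ and $z = 1$ near $P_1$ and examining $s(w + yz) = r(w - yz)$ as $y, w \to 0$ shows that $[r:s]$ is unconstrained in the limit, confirming that the fibre is all of $\mathbb P^1$ and that $F_H$ is the projection to this free coordinate. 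The symplectic compatibility of this extension is not needed for holomorphy and has already been arranged in Lemma \ref{seidel}, so it may be invoked rather than reproved.

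Finally, I would remark that combining this with Lemma \ref{seidel} gives the compactified model $\overline{\mathrm{LG}}(2) = (\overline\Gamma, F_H)$ as a genuine symplectic Lefschetz fibration extending $\mathrm{LG}(2)$, since $F_H$ is holomorphic with the same critical behaviour over $\mathbb C$ as $f_H$ and with the new fibre $Y_\infty$ smooth as required.
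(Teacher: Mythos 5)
Your proof is correct and follows essentially the same route as the paper: on $\overline{\Gamma}$ the defining equation $s(xw+yz)=r(xw-yz)$ forces $[r:s]=[xw+yz:xw-yz]$ wherever the right-hand side makes sense, so $F_H$ coincides with $R_H$, which in turn extends $f_H$ by Theorem \ref{rational}. The paper's proof is a one-line version of exactly this argument; your extra verifications --- holomorphy of the restricted projection $\pi_3\big\vert_{\overline{\Gamma}}$ and the local-chart analysis showing the fibres over $P_1$ and $P_2$ are whole $\mathbb P^1$'s --- make explicit what the paper leaves implicit, but do not change the approach.
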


\begin{proof}
 In fact, for points in $\overline{\Gamma}$ 
we have that 
$$F_H([x:y],[z:w],[r:s] ) = [r:s] = [xw+yz:xw-yz ] = R_H([x:y],[z:w]).$$
Thus, $F_H$ is an extension of $R_H$ which in turn is an extension of $f_H$ as shown in Theorem \ref{rational}. 
\end{proof}

\begin{corollary}\label{critical}
The critical points of $F_H$ coincide with the critical points of $f_H$.
\end{corollary}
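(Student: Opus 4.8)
The goal is to prove Corollary \ref{critical}: the critical points of the holomorphic extension $F_H$ on $\overline{\Gamma}$ coincide with those of the original potential $f_H$ on the open orbit $X$. Since $F_H = \pi_3|_{\overline{\Gamma}}$ agrees with $R_H$, and $R_H$ restricts to $f_H$ on $X$ (by Theorem \ref{rational}, where $\tr(v \otimes \varepsilon) = 1$ on the orbit forces $R_H = f_H$ there), the two functions literally coincide on the open dense subset $X \subset \overline{\Gamma}$. The plan is therefore to show that the passage to the compactification $\overline{\Gamma}$ introduces no new critical points: no critical points appear on the divisor at infinity $\overline{\Gamma} \setminus X$, and in particular none appear on the exceptional fibres created by blowing up the base locus $\{P_1, P_2\}$.

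First I would record that $f_H$ has exactly two critical points $(\pm 1, 0, 0)$ with critical values $\pm 2$, as established in \S\ref{structures}, and that these lie in the open orbit, hence persist as critical points of $F_H$ since $F_H|_X = f_H$. The substantive content is the reverse inclusion: every critical point of $F_H$ lies in $X$. For this I would work locally on $\overline{\Gamma}$, which is the smooth hypersurface in $\mathbb P^1 \times \mathbb P^1 \times \mathbb P^1$ cut out by $s(xw+yz) = r(xw-yz)$, and $F_H$ is the projection to the $[r:s]$ factor. A point of $\overline{\Gamma}$ is critical for $F_H$ precisely when the fibre of $\pi_3$ through it is singular, i.e.\ when $\overline{\Gamma}$ fails to be transverse to the fibration direction. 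Equivalently, I would invoke the structure set up in Lemma \ref{seidel}: we are exactly in Seidel's situation \cite[\S 3]{Se}, where $F_H$ arises as a Lefschetz fibration whose critical points are the singular points of the fibres $Y_z$, and the only fibres that can be singular are those over the critical values of the rational function $\sigma_0/\sigma_1 = (xw+yz)/(xw-yz)$ on $\overline{X}$.

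The key step is then to check the two loci added in the compactification. For the divisor at infinity $\overline{X} \setminus X$, recall from \S\ref{comp} that this is the conic $\{t = 0\} \cap \{x^2 = yz\}$, a single $\mathbb P^1$, along which the fibre $Y_\infty$ is smooth (noted explicitly in Lemma \ref{seidel}); so no critical points arise there. For the exceptional fibres over $P_1$ and $P_2$, I would use the explicit blow-up: $\overline{\Gamma}$ resolves the indeterminacy of $R_H$ precisely at these two points, and the exceptional $\mathbb P^1$ over each $P_i$ maps isomorphically (not constantly) onto the target $\mathbb P^1$ under $\pi_3$, so that $dF_H$ is nonvanishing along them — these are sections, not vertical curves, hence contain no critical points. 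Concretely, one parametrizes a neighbourhood of $P_i$ in $\overline{\Gamma}$ and verifies that the $[r:s]$ coordinate varies nondegenerately along the exceptional curve.

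The main obstacle I anticipate is the bookkeeping at the blown-up points $P_1, P_2$: one must give honest local coordinates on $\overline{\Gamma}$ near the exceptional curves and confirm both that $\overline{\Gamma}$ is smooth there and that $\pi_3$ has no critical points along them. This is where the geometry genuinely differs from the affine picture, and a careless chart computation could either miss a spurious critical point or wrongly introduce one. Everything else is formal: the agreement $F_H|_X = f_H$ is immediate, and the absence of critical points on the smooth fibre $Y_\infty$ over the divisor at infinity follows directly from the Lefschetz fibration structure imported from \cite[\S 3]{Se} in Lemma \ref{seidel}. I would therefore concentrate the proof on the local analysis over $P_1$ and $P_2$, and present the rest as a short deduction from $F_H = R_H$ together with Theorem \ref{rational}.
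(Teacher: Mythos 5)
Your plan is correct and can be completed, but it is organized quite differently from the paper's proof, which involves no case decomposition at all: for each value $[r_0:s_0]$ the fibre of $F_H$ is the $(1,1)$ conic cut out in $\mathbb{P}^1\times\mathbb{P}^1$ by $s_0(xw+yz)-r_0(xw-yz)=(s_0-r_0)xw+(s_0+r_0)yz=0$, which is singular only when $s_0=\pm r_0$, and the singular points of those two fibres are the nodes $([1:0],[0:1])$ and $([0:1],[1:0])$, i.e.\ exactly the critical points $\pm H$ of $f_H$ inside the open orbit. Since every point of $\overline{\Gamma}$ lies on some fibre, this single discriminant computation covers the exceptional curves over $P_1,P_2$ and the divisor at infinity automatically, with no local charts. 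Your stratified argument (open orbit, exceptional curves, locus at infinity) is also sound, and its best feature is the section observation: one checks from the equation $s(xw+yz)=r(xw-yz)$ that $\{P_i\}\times\mathbb{P}^1\subset\overline{\Gamma}$ and that $\pi_3$ restricts to an isomorphism on these curves, so $dF_H$ cannot vanish along them; this makes the local analysis at $P_1,P_2$ that you flag as the main obstacle essentially unnecessary, which is what your approach buys at the price of a longer case analysis. One precision you should fix: a point is critical iff the fibre through it is singular \emph{at that point}, not merely singular somewhere. The fibres over $[1:\pm 1]$ are the line pairs $\{yz=0\}$ and $\{xw=0\}$, and each of them meets both exceptional curves; with your criterion as stated, those intersection points would be wrongly flagged as critical, whereas they are smooth points of their (singular) fibres and your section argument correctly shows they are not critical.
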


\begin{proof}
For a fixed value $[r_0:s_0]$ on the target $\mathbb P^1$, the fibre of $F_H$ is cut out inside $\mathbb P^1 \times \mathbb P^1$
by the polynomial equation
\[
s_0(xw+yz) - r_0(xw-yz) = 0.
\]
This describes a singular conic only in the cases when $s_0 = \pm r_0$, thus the only critical values of $F_H$ are $[1:1]$ and $[1:-1]$ with corresponding critical points $([1:0], [0:1])$ and $([0:1],[1:0])$. These in turn correspond to the critical points $\pm H$ of $f_H$. We conclude that extending $f_H$ to $F_H$ does not produce any extra critical points. 
\end{proof}

\begin{corollary}
The Fukaya--Seidel category of $\overline{\mathrm{LG}}(2)$ is the same as the one of ${\mathrm{LG}}(2)$.
\end{corollary}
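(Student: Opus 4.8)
The plan is to show that $\overline{\mathrm{LG}}(2) = (\overline{\Gamma}, F_H)$ carries exactly the same vanishing-cycle data as $\mathrm{LG}(2)$, so that the two directed $A_\infty$-categories are literally assembled from the same thimbles, morphism complexes, and products. First I would invoke Corollary \ref{critical}, which guarantees that $F_H$ and $f_H$ have the same critical points and (under the identification of the affine chart of the target $\mathbb P^1$ with $\mathbb C$) the same critical values $\pm H$. Hence $\overline{\mathrm{LG}}(2)$ is a Lefschetz fibration with precisely two critical values and the same number of thimbles as $\mathrm{LG}(2)$. Choosing the same regular base value $0$ and the same matching paths $\beta, \gamma$, the two Lagrangian thimbles $L_0, L_1$ of $\mathrm{LG}(2)$ are retained as the thimbles of $\overline{\mathrm{LG}}(2)$.

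The crucial point is the symplectic geometry near these thimbles. By Lemma \ref{seidel}, the symplectic form on $\overline{\Gamma}$ was constructed following Seidel's patching method so that it coincides with $\Omega$ on $X$ on an open neighbourhood of the thimbles; the correction that repairs the degeneracy on the exceptional set is supported in a small neighbourhood of the base-locus points $P_1, P_2$, which were chosen disjoint from the thimbles of $f_H$. Consequently a neighbourhood of each thimble in $\overline{\Gamma}$ is symplectomorphic to the corresponding neighbourhood in $X$, and in particular the regular fibre $X_0 \subset \overline{X_0}$ agrees, as a symplectic manifold near the vanishing cycles $L_0 \cap L_1$, with the fibre used in Theorem \ref{teosl2}. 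From this I would conclude that the morphism spaces are unchanged: $\mathit{CF}^*(L_0, L_1)$ is generated by the intersection points $L_0 \cap L_1$ in the regular fibre, and these points, together with their gradings, all lie in the region where the two symplectic structures coincide, so that $\Hom(L_i, L_j)$ is given by the same formula as before.

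It then remains to compare the $A_\infty$ products. These count pseudoholomorphic discs in the regular fibre with boundary on the $L_i$, and the main obstacle is to rule out that the compactification introduces new discs escaping into the added points at infinity of $\overline{X_0}$. I would handle this by a confinement argument: since the symplectic form and a compatible almost complex structure can be taken to agree with those of $X$ on a neighbourhood of the vanishing cycles, and the cycles are bounded away from the new divisor at infinity, a maximum-principle/energy estimate forces every disc contributing to $m_k$ to remain in this neighbourhood, so no new contributions arise. Therefore the products $m_k$ are exactly those computed in Lemma \ref{productsvanish}, all vanishing except the trivial $m_2(\id, \,\cdot\,)$ and $m_2(\,\cdot\,, \id)$. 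Since thimbles, morphisms, gradings, and products all coincide, the Fukaya--Seidel category of $\overline{\mathrm{LG}}(2)$ is the same as that of $\mathrm{LG}(2)$, as claimed.
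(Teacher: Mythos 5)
Your proposal takes essentially the same route as the paper: it combines Corollary \ref{critical} (extending $f_H$ to $F_H$ produces no new critical points or values) with Lemma \ref{seidel} (the symplectic form on $\overline{\Gamma}$ is patched so as to agree with $\Omega$ on a neighbourhood of the original thimbles, the correction being supported near $P_1, P_2$ away from them), and concludes that the thimbles, morphisms and products are unchanged. Your extra maximum-principle/energy argument confining the holomorphic discs that compute the $m_k$ is a sound elaboration of a point the paper leaves implicit in its final ``therefore.''
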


\begin{proof}
Observe that in Lemma \ref{seidel} chose the symplectic form on the compactification $\overline \Gamma$ so that our original Lagrangian thimbles that generated $\Fuk(\mathrm{LG}(2))$ remain Lagrangian in the compactification. Moreover, Corollary \ref{critical} shows that no new critical points arise when we extend the potential to the compactification. Therefore the Fukaya--Seidel category corresponding to the compactification is the same as the one described in Theorem \ref{teosl2}.
\end{proof}

In particular, using the results of \S \ref{mirror} we conclude that this compact LG model $\overline{\mathrm{LG}}(2)$ does not have a projective mirror either. Hence we obtain:

\begin{theorem}\label{cnopmirror}
$\overline{\mathrm{LG}}(2)$ has no projective mirrors.
\end{theorem}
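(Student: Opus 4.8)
The plan is to reduce the compactified case directly to the non-compact case already settled in Theorem~\ref{nopmirror}. The logical skeleton is straightforward: a projective mirror to $\overline{\mathrm{LG}}(2)$ would be a projective variety $Y$ whose derived category $D^b(\Coh Y)$ is equivalent to $\Fuk(\overline{\mathrm{LG}}(2))$, but by the preceding corollary $\Fuk(\overline{\mathrm{LG}}(2))$ is literally the same $A_\infty$-category as $\Fuk(\mathrm{LG}(2))$. Hence such a $Y$ would also be a projective mirror to $\mathrm{LG}(2)$, contradicting Theorem~\ref{nopmirror}.

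First I would recall the definition: to say that $\overline{\mathrm{LG}}(2)$ has a projective mirror means there exists a smooth (or at least projective) variety $Y$ together with an equivalence of triangulated (indeed $A_\infty$) categories $D^b(\Coh Y) \simeq \Fuk(\overline{\mathrm{LG}}(2))$. Next I would invoke the Corollary above, which established that the Fukaya--Seidel categories of $\overline{\mathrm{LG}}(2)$ and $\mathrm{LG}(2)$ coincide --- this rests on the fact that the compactification was engineered (Lemma~\ref{seidel}) so that the thimbles generating $\Fuk(\mathrm{LG}(2))$ remain Lagrangian, and on Corollary~\ref{critical}, which guarantees no new critical points appear. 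Composing the hypothetical equivalence with this identification yields an equivalence $D^b(\Coh Y) \simeq \Fuk(\mathrm{LG}(2))$, exhibiting $Y$ as a projective mirror to $\mathrm{LG}(2)$.

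Finally I would apply Theorem~\ref{nopmirror}, which asserts precisely that no such projective $Y$ exists for $\mathrm{LG}(2)$. This contradiction shows the initial assumption was untenable, so $\overline{\mathrm{LG}}(2)$ admits no projective mirror, completing the argument. The entire content of Theorem~\ref{cnopmirror} is therefore carried by the earlier Corollary together with Theorem~\ref{nopmirror}, and the proof is essentially a one-line deduction once those are in hand; indeed the excerpt already anticipates this in the sentence ``using the results of \S\ref{mirror} we conclude that this compact LG model $\overline{\mathrm{LG}}(2)$ does not have a projective mirror either.''

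The only point requiring genuine care --- and hence the main obstacle --- is verifying that the identification of the two Fukaya--Seidel categories is an honest equivalence of the relevant categorical structure, not merely an abstract isomorphism of invariants. Concretely, one must ensure that the grading conventions, the $A_\infty$-products $m_k$, and the choice of matching paths transported to the compactified model agree with those computed in Theorem~\ref{teosl2}; the subtlety flagged in the Remark comparing $\mathrm{LG}(2)$ to the mirror of $\mathbb{P}^1$ shows that gradings can distinguish categories that share the same number of objects, morphisms, and products. Since the construction in Lemma~\ref{seidel} keeps the symplectic form unchanged near the thimbles, all Floer-theoretic data are computed in an unchanged neighborhood, so the grading and products are identical; once this is affirmed, the reduction to Theorem~\ref{nopmirror} is immediate.
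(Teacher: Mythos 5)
Your proposal is correct and follows essentially the same route as the paper: the paper derives Theorem~\ref{cnopmirror} directly from the corollary that $\Fuk(\overline{\mathrm{LG}}(2)) = \Fuk(\mathrm{LG}(2))$ (itself resting on Lemma~\ref{seidel} and Corollary~\ref{critical}) combined with Theorem~\ref{nopmirror}. Your additional remark about checking that gradings and $A_\infty$-products are genuinely preserved --- because the symplectic form is left unchanged near the thimbles --- is exactly the point the paper's construction is designed to secure.
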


\section{Mirror category}

Theorem \ref{teosl2} states 
that the Fukaya--Seidel category of $\mathrm{LG} (2)$ is generated by two Lagrangians $L_0$ and $L_1$ with the following morphisms
\begin{align*}
\Hom (L_i, L_j) 
\simeq
\begin{cases}
\mathbb Z \oplus \mathbb Z [-1] & i < j \\
\mathbb Z                       & i = j \\
0                               & i > j
\end{cases}\text{.}
\end{align*}
Theorems \ref{nopmirror} and \ref{cnopmirror} show that no projective variety may be the mirror of either $\mathrm{LG} (2)$
or else $\overline{\mathrm{LG}} (2)$. However, we do have the following result, which in light of Lemma \ref{tilting} below may be thought of as an instance of \cite[Cor.\,2.7]{O}.

\begin{theorem}\label{sigma2}
$\Fuk (\mathrm{LG} (2))$ is equivalent to the full triangulated subcategory $D^b (\reflectbox{$\mathrm{LG}$} (2)) := \langle \mathcal O_{F_2}, \mathcal O_{F_2} (-E) \rangle $ of $ D^b (\Coh F_2)$, where $ F_2$ is the second Hirzebruch surface and $E$ is the divisor with self-intersection $-2$.
\end{theorem}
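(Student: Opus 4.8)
The plan is to realise both sides as the category of perfect modules over a single $A_\infty$-algebra and to show that these algebras coincide. Concretely, I would match the generators by setting $L_0 \leftrightarrow \mathcal O_{F_2}(-E)$ and $L_1 \leftrightarrow \mathcal O_{F_2}$ (the direction being forced by which morphism space is nonzero), and then compare the two endomorphism algebras $\mathcal A_{\mathrm{Fuk}} = \Hom^*(L_0 \oplus L_1, L_0 \oplus L_1)$ and $\mathcal A_{\mathrm{alg}} = \mathrm{REnd}_{F_2}(\mathcal O_{F_2}(-E) \oplus \mathcal O_{F_2})$. The Fukaya--Seidel category carries its tautological $A_\infty$-enhancement, and the subcategory $\langle \mathcal O_{F_2}, \mathcal O_{F_2}(-E)\rangle$ inherits the standard dg-enhancement of $D^b(\Coh F_2)$. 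Granting these enhancements, Lemma \ref{tilting} together with \cite[Cor.\,2.7]{O} lets me identify each category with $\mathrm{Perf}$ of the corresponding endomorphism algebra, so that an equivalence $\mathcal A_{\mathrm{Fuk}} \simeq \mathcal A_{\mathrm{alg}}$ of $A_\infty$-algebras upgrades to the desired equivalence of triangulated categories.

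The first substantive step is to compute $\mathcal A_{\mathrm{alg}}$ at the level of cohomology, that is, all the $\Ext$-groups on $F_2$. Writing $F$ for the fibre class, I would use the intersection numbers $E^2 = -2$, $E \cdot F = 1$, $F^2 = 0$ and $K_{F_2} = -2E - 4F$, and combine Riemann--Roch with Serre duality. The key computation is $\Ext^*(\mathcal O_{F_2}(-E), \mathcal O_{F_2}) = H^*(F_2, \mathcal O_{F_2}(E))$: since $E$ is rigid one has $h^0 = 1$; Serre duality gives $h^2 = h^0(\mathcal O_{F_2}(K-E)) = 0$ as $K - E = -3E - 4F$ is not effective; and $\chi(\mathcal O_{F_2}(E)) = 0$ then forces $h^1 = 1$, so this group is $\mathbb C \oplus \mathbb C[-1]$. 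Dually $\Ext^*(\mathcal O_{F_2}, \mathcal O_{F_2}(-E)) = H^*(F_2, \mathcal O_{F_2}(-E)) = 0$, while $\Ext^*(\mathcal O_{F_2}, \mathcal O_{F_2}) = \Ext^*(\mathcal O_{F_2}(-E), \mathcal O_{F_2}(-E)) = \mathbb C$. Under the identification above this reproduces precisely the morphism spaces of Theorem \ref{teosl2}.

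It remains to match the $A_\infty$ products, and here the decisive observation is that both algebras are formal with vanishing higher products for purely structural reasons. Every non-identity morphism, on either side, runs from the object indexed $0$ to the object indexed $1$, and there are no nonzero morphisms in the reverse direction; consequently there is no composable string of two or more non-identity morphisms, so $m_k$ must vanish for $k \geq 3$ and $m_2$ can only be the unital composition. Thus $\mathcal A_{\mathrm{alg}}$ is intrinsically formal, its minimal model being the quiver algebra on two vertices with one degree-$0$ arrow and one degree-$1$ arrow from the first vertex to the second (and no further composable products). This is exactly the algebra underlying $\mathcal A_{\mathrm{Fuk}}$ as described in Theorem \ref{teosl2}, so the two $A_\infty$-algebras agree and the equivalence follows.

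The main obstacle I anticipate is not the $\Ext$ bookkeeping but the passage from an isomorphism of endomorphism algebras to an equivalence of the generated triangulated subcategories: this requires that both categories be recovered from their generators together with the full $A_\infty$-structure, that is, that one work with genuine enhancements rather than with the underlying triangulated categories. Once the formality argument above is in place this becomes routine through the tilting formalism of Lemma \ref{tilting} and \cite[Cor.\,2.7]{O}, since a generator with formal endomorphism algebra $\mathcal A$ presents the category as $\mathrm{Perf}(\mathcal A)$; but it is the point at which the argument must be phrased with care, to avoid the well-known subtleties of comparing triangulated categories directly.
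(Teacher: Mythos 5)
Your proposal is correct, and its skeleton is the same as the paper's: match $L_0 \leftrightarrow \mathcal O_{F_2}(-E)$, $L_1 \leftrightarrow \mathcal O_{F_2}$, compute the sheaf-side morphism spaces, and observe that they reproduce the morphism spaces of Theorem \ref{teosl2}. There are two genuine differences worth recording. First, the technique for the cohomology computation: the paper obtains the groups in (\ref{ext}) by toric methods, while you use Riemann--Roch and Serre duality with $K_{F_2} = -2E - 4F$ (your intersection-number bookkeeping is correct: $\chi(\mathcal O(E)) = 0$, $h^0 = 1$, $h^2 = 0$ forces $h^1 = 1$, and all the remaining groups vanish); either route works. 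Second, and more substantively, the paper's proof \emph{stops} at the cohomology-level comparison: the matching of the $A_\infty$-products and the passage from an isomorphism of $\Ext$-algebras to an equivalence of triangulated categories are handled only implicitly, via the remark preceding the theorem (invoking Lemma \ref{tilting} and \cite[Cor.\,2.7]{O}) and via the quiver discussion in \S\ref{quivers}, where it is noted that the DG quiver has no composable arrows so all nontrivial products vanish, cf.\ Lemma \ref{productsvanish}. Your intrinsic-formality argument --- directedness means there is no composable string of two non-identity morphisms, hence $m_k = 0$ for $k \geq 3$ and $m_2$ is forced to be unital composition, on both sides --- together with your explicit insistence on working with enhancements rather than bare triangulated categories, supplies precisely the justification that the paper's terse proof leaves to the reader. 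In that sense your write-up is a completion of the paper's argument rather than a departure from it.
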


\begin{proof}
Let $[x_0 : x_1 : x_2]$ and $[y_0 : y_1]$ be the standard coordinates on $\mathbb P^2$ and $\mathbb P^1$. The second Hirzebruch surface is the hypersurface $F_2 \subset \mathbb P^2 \times \mathbb P^1$ cut out by the equation $x_0 y_0^2 - x_1 y_1^2$. The fibre $F$ of the natural projection to $\mathbb P^1$ is a divisor with self-intersection $0$; the exceptional fibre of the natural projection to $\mathbb P^2$ is a prime divisor $E$ with self-intersection $-2$. The line bundles associated to $E, F$ generate the Picard group $\Pic (F_2)$ with relations
\[
E^2 = -2, \quad E \cdot F = 1, \quad F^2 = 0.
\]
Now consider the derived category generated by the line bundles $\mathcal O_{F_2}$ and $\mathcal O_{F_2} (-E)$; we denote this category by $D^b (\reflectbox{$\mathrm{LG}$} (2))$, even though Theorem \ref{nopmirror} shows that it is not the derived category of coherent sheaves on any projective variety $\reflectbox{$\mathrm{LG}$} (2)$.

The $\Hom$ and $\Ext^k$ groups of line bundles on $F_2$ may be calculated via toric geometry, giving
\begin{equation}
\label{ext}
\begin{aligned}
\Hom (\mathcal O, \mathcal O) \simeq \Hom (\mathcal O (-E), \mathcal O (-E)) &\simeq \mathbb C \\
\Hom (\mathcal O (-E), \mathcal O) &\simeq \mathbb C \\
\Ext^1 (\mathcal O (-E), \mathcal O) \simeq H^1 (F_2, \mathcal O (E)) &\simeq \mathbb C
\end{aligned}
\end{equation}
all other $\Hom$ and $\Ext^k$ groups being zero.

Setting $\mathcal L_0 := \mathcal O_{F_2} (-E)$ and $\mathcal L_1 := \mathcal O_{F_2}$ we have in the derived category
\begin{align*}
\Hom (\mathcal L_i, \mathcal L_j) 
\simeq
\begin{cases}
\mathbb C \oplus \mathbb C [-1] & i < j \\
\mathbb C                       & i = j \\
0                               & i > j
\end{cases}
\end{align*}
in agreement with (\ref{morphismsfukaya}).
\end{proof}

\section{Quivers}
\label{quivers}

(\ref{ext}) shows that the collection $(\mathcal L_0, \mathcal L_1)$ has nonvanishing $\Ext^k$ groups for $k = 1$. Following \cite{hilleperling} we apply a ``partial mutation'' (modifying some elements in the collection) to find a pair of locally free sheaves generating $D^b (\reflectbox{$\mathrm{LG}$} (2))$, with {\it vanishing} $\Ext^k$ groups for $k > 0$.

Let $\mathcal E$ be a nontrivial extension of $\mathcal O (-E)$ by $\mathcal O$. We obtain a triangle
\[
\mathcal O \to \mathcal E \to \mathcal O (-E) \otimes \Ext^1 (\mathcal O (-E), \mathcal O) \to \mathcal O [1].
\]
By the results of \cite{hilleperling} (or by direct verification) we have:
\begin{lemma}
\label{tilting}
The pair $(\mathcal O, \mathcal E)$ generates $D^b (\reflectbox{$\mathrm{LG}$} (2))$ and has vanishing $\Ext^k$ groups for $k > 0$. In particular, $\mathcal O \oplus \mathcal E$ has no self-extensions and is a tilting bundle for $D^b (\reflectbox{$\mathrm{LG}$} (2))$.
\end{lemma}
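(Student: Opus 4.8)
The plan is to verify Lemma~\ref{tilting} by first constructing the bundle $\mathcal E$ explicitly via the extension triangle, then checking the two claimed properties: that $(\mathcal O, \mathcal E)$ generates the same subcategory as $(\mathcal O, \mathcal O(-E))$, and that all positive $\Ext^k$ groups vanish. First I would fix $\mathcal E$ as the unique (up to isomorphism) nontrivial extension
\[
0 \to \mathcal O \to \mathcal E \to \mathcal O(-E) \to 0,
\]
which exists because $\Ext^1(\mathcal O(-E), \mathcal O) \simeq \mathbb C$ is one-dimensional by~(\ref{ext}); since this $\Ext^1$ group is exactly $\mathbb C$, the nonzero extension class is unique up to scalar, so $\mathcal E$ is well defined. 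This is the ``partial mutation'' (left mutation of $\mathcal O(-E)$ through $\mathcal O$) in the sense of Hille--Perling, and $\mathcal E$ is automatically locally free since it is an extension of line bundles on a smooth surface.

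Next I would establish generation. The short exact sequence above yields an exact triangle in $D^b(\Coh F_2)$, so in the triangulated subcategory $\langle \mathcal O, \mathcal E\rangle$ we can recover $\mathcal O(-E)$ from $\mathcal O$ and $\mathcal E$ by taking the cone of $\mathcal O \to \mathcal E$ (up to shift); conversely $\mathcal E$ lies in $\langle \mathcal O, \mathcal O(-E)\rangle$ by the same triangle. Hence $\langle \mathcal O, \mathcal E\rangle = \langle \mathcal O, \mathcal O(-E)\rangle = D^b(\reflectbox{$\mathrm{LG}$}(2))$, which is the generation claim.

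For the vanishing of self-extensions I would compute $\Ext^k$ among $\mathcal O$ and $\mathcal E$ for $k > 0$ by applying the long exact sequences obtained from the defining triangle to each of the two objects in turn. Concretely, applying $\Hom(-, \mathcal O)$, $\Hom(-, \mathcal E)$, $\Hom(\mathcal O, -)$ and $\Hom(\mathcal E, -)$ to the triangle reduces every $\Ext^k(\mathcal E, \mathcal E)$, $\Ext^k(\mathcal O, \mathcal E)$ and $\Ext^k(\mathcal E, \mathcal O)$ to the already-known groups in~(\ref{ext}) between $\mathcal O$ and $\mathcal O(-E)$. The key point is that the connecting maps in these long exact sequences are given by Yoneda product with the extension class defining $\mathcal E$; because that class generates the one-dimensional $\Ext^1(\mathcal O(-E), \mathcal O)$, the relevant connecting map $\Hom(\mathcal O, \mathcal O) \to \Ext^1(\mathcal O(-E), \mathcal O)$ is an isomorphism, and chasing this through kills precisely the surviving $\Ext^1$. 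I expect this bookkeeping—correctly identifying each connecting homomorphism as multiplication by the generating extension class and confirming it is an isomorphism—to be the main obstacle, since a sign or identification error there is exactly what would spuriously leave a nonzero $\Ext^1$; this is why the statement offers ``direct verification'' as an alternative to citing \cite{hilleperling}, whose mutation formalism guarantees the outcome abstractly.

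Finally, once $\Ext^k(\mathcal O \oplus \mathcal E, \mathcal O \oplus \mathcal E) = 0$ for all $k > 0$, the bundle $\mathcal O \oplus \mathcal E$ has no self-extensions, and together with the generation statement this is exactly the definition of a tilting bundle for $D^b(\reflectbox{$\mathrm{LG}$}(2))$, completing the proof.
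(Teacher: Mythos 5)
Your proposal is correct, and it is essentially the ``direct verification'' that the paper only alludes to: the paper gives no written proof of Lemma \ref{tilting}, stating it follows ``by the results of \cite{hilleperling} (or by direct verification)''. Your argument carries out that verification correctly. Generation follows from the defining triangle exactly as you say, since $\mathcal O(-E)$ is the cone of $\mathcal O \to \mathcal E$ and conversely $\mathcal E$ is an extension of objects of $\langle \mathcal O, \mathcal O(-E)\rangle$, so the two subcategories coincide. For the $\Ext$ vanishing, the point you isolate is indeed the whole content: the only group that could survive is $\Ext^1(\mathcal O(-E),\mathcal O)\simeq \mathbb C$, and it is hit surjectively by the connecting homomorphisms $\Hom(\mathcal O,\mathcal O)\to \Ext^1(\mathcal O(-E),\mathcal O)$ (from applying $\Hom(-,\mathcal O)$ to the extension) and $\Hom(\mathcal O(-E),\mathcal O(-E))\to \Ext^1(\mathcal O(-E),\mathcal O)$ (from applying $\Hom(\mathcal O(-E),-)$), each of which is Yoneda multiplication by the extension class and hence an isomorphism of one-dimensional spaces because the extension is nontrivial; all remaining terms in the four long exact sequences vanish by (\ref{ext}), so $\Ext^k$ of every pair among $\mathcal O,\mathcal E$ vanishes for $k>0$. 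The trade-off between the two routes: the paper's citation buys brevity and situates the example inside Hille--Perling's general machinery of tilting bundles on rational surfaces, while your computation is self-contained and makes explicit that the nontriviality of the extension class is precisely what kills the surviving $\Ext^1$.
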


The fact that the collections $(\mathcal O, \mathcal O (-E))$ and $(\mathcal O, \mathcal E)$ both generate $D^b (\reflectbox{$\mathrm{LG}$} (2))$ means that $D^b (\reflectbox{$\mathrm{LG}$} (2))$ is equivalent to the derived categories of (DG) modules over the corresponding (DG) quivers. These equivalences can be obtained by writing the objects of each collection as vertices and a basis for the morphisms between these objects as arrows.

The presence of nontrivial extensions means that the collection $(\mathcal O, \mathcal O (-E))$ gives rise to a {\it DG quiver} $\widetilde Q$
\begin{align*}
\begin{tikzpicture}[x=8em,y=1em]
\draw[line width=1pt, fill=black] (1,0) circle(0.3ex);
\draw[line width=1pt, fill=black] (0,0) circle(0.3ex);
\node[shape=circle, scale=0.9](L) at (0,0) {};
\node[shape=circle, scale=0.9](R) at (1,0) {};
\path[-stealth, line width=.6pt, line cap=round] (L) edge[out=20, in=160] node[above, font=\scriptsize] {$ \alpha$} (R);
\path[-stealth, line width=.6pt, line cap=round, dotted] (L) edge[out=-20, in=-160] node[below, font=\scriptsize] {$\overline \alpha$} (R);
\end{tikzpicture}
\end{align*}
where $\alpha$ is of degree $0$, $\overline \alpha$ of degree $1$. The associated path algebra $\widetilde A$ is a DG algebra with differential $\partial \alpha = \overline \alpha$. Since $\widetilde Q$ contains no composable arrows, all products except for multiplication by scalars vanish; {\it cf.}\ Lemma \ref{productsvanish}. We have an equivalence of triangulated categories $D^b (\text{dg mod-} \widetilde A) \simeq D^b (\reflectbox{$\mathrm{LG}$} (2))$. 

The absence of nontrivial extensions means that the collection $(\mathcal O, \mathcal E)$ gives rise to an ordinary quiver
\begin{align*}
\begin{tikzpicture}[x=8em,y=1em]
\draw[line width=1pt, fill=black] (1,0) circle(0.3ex);
\draw[line width=1pt, fill=black] (0,0) circle(0.3ex);
\node[shape=circle, scale=0.9](L) at (0,0) {};
\node[shape=circle, scale=0.9](R) at (1,0) {};
\path[-stealth, line width=.6pt, line cap=round] (L) edge[out=20, in=160] node[above, font=\scriptsize] {$\alpha$} (R);
\path[stealth-, line width=.6pt, line cap=round] (L) edge[out=-20, in=-160] node[below, font=\scriptsize] {$\beta$} (R);
\end{tikzpicture}
\end{align*}
with relation $\beta \alpha = 0$. The associated path algebra $A$ is a noncommutative ordinary algebra, {\it i.e.}\ a DG algebra concentrated in degree $0$. Again, we obtain an equivalence of triangulated categories $D^b (\text{dg mod-}A) \simeq D^b (\text{mod-}A) \simeq D^b (\reflectbox{$\mathrm{LG}$} (2))$.

\end{document}